\newcommand{\nmpcfoot}{Draft paper, 2011}
\newcommand{\Atitle}[1]{
{\Large\bf
\begin{center}
#1
\end{center}
\vspace{.5cm}
}}
\newcommand{\Aauthor}[1]{
{\large
\begin{center}
#1
\end{center}
\vspace{.0cm}
}}
\newcommand{\Aaddress}[1]{
{\em
\begin{center}
#1
\end{center}
\vspace{.2cm}
}}
\newcommand{\Aabstract}[1]{
\noindent{\bf Abstract:} #1
\vspace{.5cm}
}
\newcommand{\Akeywords}[1]{
\noindent{\bf Keywords} : #1
\vspace{.5cm}
}
\newtheorem{thm}{Theorem}[section]
\newtheoremstyle{style1}{\topsep}{\topsep}
{\rmfamily}	%Body font
{}		%indent
{\bfseries}	%thm head font
{}		%punctation after thm headnet
{\newline}	%space after thm head
{}		%thm head spec
\newtheoremstyle{style2}{\topsep}{\topsep}
{\itshape}	%Body font
{}		%indent
{\bfseries}	%thm head font
{}		%punctation after thm headnet
{\newline}	%space after thm head
{}		%thm head spec
\theoremstyle{style1}
\newtheorem{definition}[thm]{Definition}
\newtheorem{example}[thm]{Example}
\newtheorem{algorithm}[thm]{Algorithm}
\newtheorem{assumption}[thm]{Assumption}
\theoremstyle{style2}
\newtheorem{theorem}[thm]{Theorem}
\newtheorem{corollary}[thm]{Corollary}
\newtheorem{proposition}[thm]{Proposition}
\newtheorem{remark}[thm]{Remark}
\def\R{\mathbb{R}}
\def\N{\mathbb{N}}
\def\X{X}
\def\bX{\mathbb{X}}
\def\U{U}
\def\bU{\mathbb{U}}
\def\cB{\mathcal{B}}
\def\cK{\mathcal{K}}
\def\cKL{\mathcal{K\hspace*{-0mm}L}}
\def\cS{\mathcal{S}}
\def\l{\ell}
\def\xs{x^\star}
\def\us{u^\star}
\def\argmin{\mathop{\rm argmin}}
\newenvironment{fshaded}{%
\MakeFramed {\FrameRestore}}%
{\endMakeFramed}
\definecolor{shadecolor}{rgb}{1.,1.,1.}%
\definecolor{framecolor}{rgb}{.0,.0,.0}%
\begin{document}
%%%%%%%%%%%%%%%%%%%%%%%%%%%%%%%%%%%%%%%%%%%%%%%%%%%%%%%%%%%%%%%%%%%%%%%%%%%%%%%%
\Atitle{Stability and Performance Guarantees for MPC Algorithms without Terminal Constraints\footnote{This work was supported by the DFG Priority Project 1305 ``Control Theory of Digitally Networked Dynamical Systems'', grant number Gr1569/12, and the Leopoldina Fellowship Programme LPDS 2009-36.}}

\Aauthor{J\"{u}rgen Pannek$^\dagger$\footnote{Corresponding author,~e-mail:~\textsf{juergen.pannek@googlemail.com}, Phone: +49\,89\,6004\,2567, Fax: +49\,89\,6004\,2136} and Karl Worthmann$^\ddagger$}

\Aaddress{%
$^\dagger$ University of the Federal Armed Forces, 85577 Munich, Germany, {\tt
juergen.pannek@googlemail.com}\\
$^\ddagger$ TU Ilmenau, 98693 Ilmenau, Germany, {\tt 
karl.worthmann@tu-ilmenau.de}
}

\Akeywords{Asymptotic stability, feedback, model predictive control algorithm, performance, receding horizon control.}

\Aabstract{A typical bottleneck of model predictive control algorithms is the computational burden in order to compute the receding horizon feedback law which is predominantly determined by the length of the prediction horizon. Based on a relaxed Lyapunov inequality we present techniques which allow us to show stability and suboptimality estimates for a reduced prediction horizon. In particular, the known structural properties of suboptimality estimates based on a controllability condition are used to cut the gap between theoretic stability results and numerical observations.}

\newcommand{\rauthor}{J.Pannek, K.Worthmann}
%%%%%%%%%%%%%%%%%%%%%%%%%%%%%%%%%%%%%%%%%%%%%%%%%%%%%%%%%%%%%%%%%%%%%%%%%%%%%%%%

%%%%%%%%%%%%%%%%%%%%%%%%%%%%%%%%%%%%%%%%%%%%%%%%%%%%%%%%%%%%%%%%%%%%%%%%%%%%%%%%
\section{Introduction}
%%%%%%%%%%%%%%%%%%%%%%%%%%%%%%%%%%%%%%%%%%%%%%%%%%%%%%%%%%%%%%%%%%%%%%%%%%%%%%%%

%%%%%%%%%%%%%%%%%%%%%%%%%%%%%%%%%%%%%%%%%%%%%%%%%%%%%%%%%%%%%%%%%%%%%%%%%%%%%%%%
\section{Introduction}
%%%%%%%%%%%%%%%%%%%%%%%%%%%%%%%%%%%%%%%%%%%%%%%%%%%%%%%%%%%%%%%%%%%%%%%%%%%%%%%%

Model predictive control (MPC), also termed receding horizon control, is a well established method for the optimal control of linear and nonlinear systems \cite{CB2004, RM2009} and also widely used in industry%industrial applications
, cf. \cite{QB2003, GSAFBD2010}. The method generates a sequence of finite horizon optimal control problems in order to approximate the solution of an infinite horizon optimal control problem, the latter being, in general, computationally intractable. Each solution of such a finite horizon control problem yields a %For each resulting 
sequence of control values of which only the first element is implemented at the corresponding time step. This paradigm is %renders this method to be 
iteratively applicable and generates a closed loop static state feedback. Due to the replacement of the original infinite by a sequence of finite horizon control problems, stability of the resulting closed loop may be lost. 

To ensure stability, several modifications of the finite horizon control problems such as stabilizing terminal constraints \cite{KG1988} or a local Lyapunov function \cite{ChAl1998} as an additional terminal weight have been proposed. Since the construction of suitably chosen terminal costs, e.g. a local control Lyapunov function, is a challenging task, within this work, we focus on the computationally attractive form of MPC methodology without stabilizing terminal constraints and/or costs. For such a control loop feasibility, stability, and suboptimality has been shown in \cite{TunaMessinaTeel2006,GrueneNMPC2012} via a relaxed Lyapunov inequality, see also \cite{SX1997,PrimbsNevistic2000} for the linear case. However, since either a controllability condition or knowledge on the optimal value function of the finite horizon subproblems is required in order to deduce concise bounds on the prediction horizon length, the computation of an appropriate receding horizon invariant initial set satisfying these assumption is also demanding --- in particular for nonlinear systems. %Furthermore, stability is ensured for a set of initial values. 
Furthermore, the obtained results, e.g. performance bounds, may be conservative since stable behavior of the MPC controlled closed loop can be observed in many examples although stability can be guaranteed by means of these theoretical results for a longer prediction horizon only, cf. \cite{So1992}. %\juergen{Unfortunately, the stability result requires for one a controllability assumption or knowledge on the optimal value function for the finite horizon subproblem in order to deduce concise bounds on the required prediction horizon length which is a restrictive condition especially for nonlinear systems. Secondly, the set of repeatedly feasible initial values for which stability is ensured is difficult to compute. And last, the derived bounds may be rather conservative since stable behavior of the MPC controlled closed loop can be observed in many examples although stability can be guaranteed by means of these theoretical results for a longer prediction horizon only, cf. \cite{So1992}. }

%Our goal consists of designing an MPC algorithm which ensures desired stability properties at runtime for a given initial condition for significantly smaller prediction horizons. From a practical point of view reducing the prediction horizon and, thus, the computational computational burden of the MPC algorithm is important. 
Here, our main goal is to reduce this conservatism with respect to the estimated prediction horizon length which is of particular interest since the computational burden of the MPC algorithm grows rapidly with the prediction horizon. Hence, we want to design an MPC algorithm which ensures desired stability properties at runtime for a given initial condition for small prediction horizons. To this end, we also employ a relaxed Lyapunov inequality but do not aim at verifying it at each sampling instant. Our starting point is a result shown in \cite{GPSW2010}: implementing more than only the first element of the computed open loop sequence of control values and checking the relaxed Lyapunov inequality at the next update time enhances the suboptimality bound from \cite{GrueneNMPC2012} for a large class of control systems. %But, since doing so may be harmful in terms of robustness, cf. \cite{MS2007}, additional conditions on the structure of consecutive MPC problems are presented which guarantee the same relaxed Lyapunov and, thus, stability to be maintained if the control loop is closed more often. 
While the idea of utilizing several open loop elements has already been discussed in \cite{NS1994}, doing so may be harmful in terms of robustness, cf. \cite{MS2007}. To cover this issue, conditions are presentedwhose satisfaction guarantee that the stability condition is maintained and the control loop is closed more often.

Additionally, we deduce stability and performance estimates which allow us to violate the relaxed Lyapunov inequality temporarily, much alike the watchdog technique used in nonlinear optimization \cite{CPLP1982}. To this end, we use an idea similar to \cite{G2010} and monitor the progress made in the decrease of the value function along the closed loop. Here, we point out that these conditions are checkable at runtime of the algorithm which %\juergen{The beauty of the proposed methodology is that the conditions are checkable at runtime of the algorithm. \juergen{Hence, if one allows for varying \karl{prediction}\sout{optimization} horizons and respective computing time bounds are known, \sout{then} this property 
renders such an approach applicable in practice.

The paper is organized as follows: In the following section the considered MPC problem and a trajectory based stability theorem from \cite{GP2011} are presented. Section \ref{Section:Reducing the optimization horizon using longer control horizons} generalizes this result to an MPC setting which allows for implementing longer parts of the sequence of control values computed at an update time instant. Furthermore, a first algorithm utilizing this weaker condition is derived. To carry over the improvements of the suboptimality bound, an algorithmic approach is presented in Section \ref{Section:Robustification by reducing the control horizon}.
The issue of an exit strategy is addressed in Section \ref{Section:Acceptable violations of relaxed Lyapunov inequality} which allows us to temporarily violate the relaxed Lyapunov inequality and to further improve previously derived suboptimality estimates. Last, conclusions are drawn in Section \ref{Section:Conclusions}.
Throughout the work a simple example repeatedly provides insight into the improvements achieved by the proposed results.

%%%%%%%%%%%%%%%%%%%%%%%%%%%%%%%%%%%%%%%%%%%%%%%%%%%%%%%%%%%%%%%%%%%%%%%%%%%%%%%%
\section{Problem Setup and Preliminaries}
\label{Section:Setup and Preliminaries}
%%%%%%%%%%%%%%%%%%%%%%%%%%%%%%%%%%%%%%%%%%%%%%%%%%%%%%%%%%%%%%%%%%%%%%%%%%%%%%%%

Let $\N_0$ denote the natural numbers including zero and $\R_0^+$ the nonnegative real numbers. A continuous function $\gamma: \R_0^+ \rightarrow \R_0^+$ is said to be of class $\cK_\infty$ if it satisfies $\gamma(0) = 0$, is strictly increasing and unbounded. Furthermore, a continuous function $\beta: \R_0^+ \times \R_0^+ \rightarrow \R_0^+$ is of class $\cKL$ if it is strictly decreasing in its second argument with $\lim_{t \rightarrow \infty} \beta(r, t) = 0$ for each $r > 0$ and satisfies $\beta(\cdot, t) \in \cK_\infty$ for each $t \geq 0$. 

Let $X$ and $U$ be arbitrary Banach spaces equipped with the metrics $d_{X}: X \times X \to \R_0^+$ and $d_{U}: U \times U \to \R_0^+$. In this work nonlinear discrete time control systems of the form
\begin{align}
	\label{Setup:nonlinear discrete time system}
	x(n + 1) = f(x(n), u(n)), \quad x(0) = x_{0}
\end{align}
for $n \in \N_0$ are considered. $x(n) \in \bX \subset \X$ and $u(n) \in \bU \subset \U$ represent the state and control of the system at time instant $n$. Hence, $\X$ and $\U$ stand for the state and the control value space, respectively. Since $\X$ and $\U$ are arbitrary metric spaces, the following results are applicable to discrete time dynamics induced by a sampled -- finite or infinite dimensional -- systems, see, e.g., \cite{AGW2010a, IK2002}. Here, using subsets $\bX$ and $\bU$ allows for incorporating constraints. Throughout this work, the space of control sequences $u: \N_0 \rightarrow \bU$ is denoted by $\bU^{\N_0}$.

For control system \eqref{Setup:nonlinear discrete time system} we want to construct a static state feedback $u = \mu(x)$ which asymptotically stabilizes the system at a desired equilibrium $\xs$ via model predictive control. The trajectory generated by the feedback map $\mu: \bX \rightarrow \bU$ is denoted by $x_\mu(n) = x_\mu(n;x_0)$, $n=0,1,\ldots$. In order to define asymptotic stability rigorously, the open ball with center $x$ and radius $r$ is denoted by $\cB_r(x)$ and the distance from $x$ to the equilibrium $\xs$ is represented by $\| x \|_{\xs} := d_{\X}(x, \xs)$. 

\begin{definition}\label{Setup:def:stability}
	Let a static state feedback $\mu: \bX \rightarrow \bU$ satisfying $f(\xs,\mu(\xs)) = \xs$ for system \eqref{Setup:nonlinear discrete time system} be given, i.e., $\xs$ is an equilibrium for the closed loop. Then, $\xs$ is said to be (locally) {\em asymptotically stable} if there exist $r>0$ and a function $\beta(\cdot,\cdot) \in \cKL$ such that, for each $x_0 \in \cB_r(\xs) \cap \bX$,
	\begin{align}\label{Setup:def:stability:eq1}
		x_\mu(n;x_0) \in \bX \qquad\text{and}\qquad \| x_\mu(n;x_0) \|_{\xs} \leq \beta( \| x_0 \|_{\xs}, n) \quad\forall\, n \in \N_0.
	\end{align}
\end{definition}

Without loss of generality existence of a control value $\us \in \bU$ such that $f(\xs, \us) = \xs$ holds is assumed throughout this work. The quality of the feedback is evaluated in terms of the infinite horizon cost functional
\begin{align}\label{Setup:infinite cost functional}
	J_{\infty} (x_{0}, u) = \sum\limits_{n = 0}^\infty \l(x(n), u(n))
\end{align}
with continuous stage cost $\l: \X \times \U \rightarrow \R_0^+$ satisfying $\l(\xs, \us) = 0$ and $\l(x,u) > 0$ for all $u \in \U$ for each $x \neq \xs$.  The optimal value function corresponding to \eqref{Setup:infinite cost functional} is denoted by $V_\infty(x_0) = \inf_{\{u \in \bU^{\N_0}: x(n) \in \bX\, \forall n \in \N\}} J_\infty(x_0, u)$. Now, utilizing Bellman's principle of optimality for the optimal value function $V_\infty(\cdot)$ yields the Lyapunov equation
\begin{align}\label{Setup:eq:Lyapunov equation}
	V_\infty(x_0) = \inf_{\{u \in \bU: f(x_0,u) \in \bX\}} \{\l(x_0,u) + V_\infty(f(x_0,u)) \}.
\end{align}
In order to avoid technical difficulties, let the infimum of this equality with respect to $u$ be attained, i.e., there exists a feasible control value $u_{x_0}^\star$, i.e. $u_{x_0}^\star \in \bU: f(x_0,u_{x_0}^\star) \in \bX$, such that $V_\infty(x_0) = \l(x_0,u_{x_0}^\star) + V_\infty(f(x_0,u_{x_0}^\star))$ holds. $u_{x_0}^\star$ is called the $\argmin$ of the right hand side of \eqref{Setup:eq:Lyapunov equation} --- albeit uniqueness of the minimizer $u_{x_0}^\star$ is not required. In case of uniqueness the $\argmin$-operator can be understood as an assignment, otherwise it is just a convenient way of writing ``$u_{x_0}^\star$ minimizes the right hand side of \eqref{Setup:eq:Lyapunov equation}''. Hence, an optimal feedback law on the infinite horizon is defined via
\begin{align}\label{Setup:infinite control}
	\mu_{\infty}(x(n)) := \argmin_{\{u \in \bU: f(x(n),u) \in \bX\}} \left\{ \l(x(n), u) + V_{\infty}(f(x(n),u)) \right\} %= \argmin_{\{u \in \bU: x(n+1) \in \bX\}} \left\{ \l(x(n), u) + V_{\infty}(x(n + 1)) \right\}.
\end{align}
In general, the computation of the control law \eqref{Setup:infinite control} requires the solution of a Bellman equation which is, in general, very hard to solve. In order to avoid this burden, we utilize a model predictive control approach without terminal constraints and/or costs to approximate the infinite horizon optimal control. Here, we like to point out that, if a terminal cost can be constructed such that the ``cost to go'' is sufficiently well approximated as in quasi infinite horizon optimal control, see, e.g. \cite{ChAl1998}, its incorporation can contribute to the controller performance. However, if the domain of the terminal cost is too small, then the needed coupling terminal constraint may lead to drawbacks from a performance point of view.

MPC consists of three distinct steps: First, given measurements of the current state $x_{0}$, an optimal sequence of control values over a truncated and, thus, finite horizon is computed which minimizes the cost functional
\begin{align*}
	J_{N}(x_{0}, u) = \sum_{k = 0}^{N - 1} \l(x_{u}(k; x_{0}), u(k)).
\end{align*}
Here, $x_u(\cdot;x_0)$ denotes the trajectory emanating from $x_0$ corresponding to the input signal $u(\cdot)$. As a result, we obtain the open loop control $u_{N}(\cdot; x_{0}) = \argmin_{\{u \in \bU^N: f(x_{0},u) \in \bX\}} J_{N}(x_{0}, u)$ and the corresponding open loop state trajectory
\begin{align}
	\label{Setup:open loop system}
	x_{u_{N}}(n + 1; x_{0}) = f(x_{u_{N}}(n; x_{0}), u_{N}(n; x_{0}))
\end{align}
for $n = 0, \ldots, N - 1$ with initial value $x_{u_{N}}(0; x_{0}) = x_{0}$. In a second step, the first element of the open loop control is employed in order to define the feedback law $\mu_{N}(x_{0}) := u_{N}(0; x_{0})$. Last, the feedback is applied to the system under control revealing the closed loop system
\begin{align}
	\label{Setup:closed loop system}
	x_{\mu_{N}}(n + 1; x_{0}) = f(x_{\mu_{N}}(n; x_{0}), \mu_{N}(x_{\mu_{N}}(n; x_{0})))
\end{align}
for which the abbreviation $x_{\mu_{N}}(\cdot)$ will be used whenever the initial value $x_{0}$ is clear. The closed loop costs with respect to the feedback law $\mu_N(\cdot)$ are given by $V_\infty^{\mu_N}(x_0) = \sum_{n = 0}^\infty \l(x_{\mu_{N}}(n), \mu_{N}(x_{\mu_{N}}(n)))$ and $V_N(x_0)$ % = \inf_{u \in \bU^{N}} J_N(x_0, u)$ 
denotes the optimal value function corresponding to $J_N(x_0, \cdot)$.

In the literature endpoint constraints or a Lyapunov function type endpoint weight are used to ensure stability of the closed loop, see, e.g., \cite{KG1988, ChAl1998, JH2005, Graichen2010}. Here, we focus on an MPC version without these modifications. Hence, feasibility of the MPC scheme is an issue that cannot be neglected. In order to exclude a scenario in which the closed loop trajectory runs into a dead end, the following viability condition is assumed.
\begin{assumption}
	For each $x \in \bX$ there exists a control value $u_{x} \in \bU$ satisfying $f(x, u_{x}) \in \bX$.
\end{assumption}
We like to stress that the computation of such a control invariant set $\bX$ is a hard task. However, suitable methods exists, see, e.g. \cite{KerriganMaciejowski2000}. Furthermore, we refer to \cite{PrimbsNevistic2000}, \cite{GrueneNMPC2012} for a detailed discussion on feasibility issues in MPC without terminal constraints and/or costs. %for respective generalizations. 

In order to be applicable at runtime of the algorithm, we present our conditions in a trajectory based setting, i.e. contrary to Definition \ref{Setup:def:stability} we suppose conditions to be satisfied along a particular closed loop trajectory only. While such conditions are less restrictive, we stress this difference by calling the closed loop solution to behave like a solution of an asymptotically stable system. According to \cite[Proposition 7.6]{GP2011}, stable behavior of the closed loop trajectory can be guaranteed for such a controller using relaxed dynamic programming, cf. \cite{LR2006}.
\begin{theorem}\label{Setup:thm:aposteriori}
	(i) Consider the feedback law $\mu_{N}: \bX \rightarrow \bU$ and the closed loop trajectory $x_{\mu_{N}}(\cdot)$ of \eqref{Setup:closed loop system} with initial value $x_0 \in \bX$. If the value function $V_{N}: \bX \rightarrow \R_{0}^{+}$ satisfies 
	\begin{align}
		\label{Setup:thm:aposteriori:eq1}
		V_{N}(x_{\mu_{N}}(n)) \geq V_{N}(x_{\mu_{N}}(n + 1)) + \alpha \l(x_{\mu_{N}}(n), \mu_{N}(x_{\mu_{N}}(n))) \qquad\forall\, n \in \N_0
	\end{align}
	for some $\alpha \in (0, 1]$, then the performance estimate 
	\begin{align}
		\label{Setup:thm:aposteriori:eq2}
		\alpha V_{\infty}(x_{\mu_{N}}(n)) \leq \alpha V_{\infty}^{\mu_N}(x_{\mu_{N}}(n)) \leq V_{N}(x_{\mu_{N}}(n)) \leq V_{\infty}(x_{\mu_{N}}(n)) 
	\end{align}
	holds for all $n \in \N_{0}$.\\
	(ii) If, in addition, there exist $\alpha_{1}, \alpha_{2}, \alpha_{3} \in \cK_\infty$ such that $\alpha_{1}( \| x \|_{\xs} ) \leq V_{N}(x) \leq \alpha_{2}( \| x \|_{\xs} )$ and $\l(x,u) \geq \alpha_{3}( \| x \|_{\xs} )$ hold for $x = x_{\mu_N}(n) \in \bX$, $n \in \N_{0}$, then there exists $\beta\in \cKL$ which only depends on $\alpha_{1}, \alpha_{2}, \alpha_{3}$ and $\alpha$ such that \eqref{Setup:def:stability:eq1} holds, i.e., $x_{\mu_{N}}(\cdot)$ behaves like a trajectory of an asymptotically stable system.
\end{theorem}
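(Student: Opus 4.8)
The plan is to handle the two parts in turn: part~(i) is a relaxed dynamic programming estimate that reduces to telescoping \eqref{Setup:thm:aposteriori:eq1}, and part~(ii) turns this into asymptotic stability via a scalar comparison argument. For part~(i) I would prove the chain \eqref{Setup:thm:aposteriori:eq2} from the outside inwards. The leftmost inequality $\alpha V_\infty(x_{\mu_N}(n)) \le \alpha V_\infty^{\mu_N}(x_{\mu_N}(n))$ is immediate from $\alpha>0$ and the fact that $V_\infty$ is the infimum of $J_\infty$ over all admissible control sequences, so $V_\infty(x)\le V_\infty^{\mu_N}(x)$. The rightmost inequality $V_N(x_{\mu_N}(n)) \le V_\infty(x_{\mu_N}(n))$ follows from $\l\ge 0$: truncating any $u\in\bU^{\N_0}$ to its first $N$ entries gives $J_N(x_0,u)\le J_\infty(x_0,u)$, and taking infima yields $V_N\le V_\infty$ pointwise.

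For the central inequality, fix $K\in\N$ and sum \eqref{Setup:thm:aposteriori:eq1} over $n=0,\dots,K-1$; the left-hand side telescopes to $V_N(x_{\mu_N}(0))-V_N(x_{\mu_N}(K))\le V_N(x_{\mu_N}(0))$ because $V_N\ge 0$, so
\[
	V_N(x_{\mu_N}(0)) \ \ge\ \alpha\sum_{n=0}^{K-1}\l\big(x_{\mu_N}(n),\mu_N(x_{\mu_N}(n))\big).
\]
Letting $K\to\infty$ (the partial sums are nondecreasing since $\l\ge 0$) gives $V_N(x_{\mu_N}(0))\ge\alpha V_\infty^{\mu_N}(x_{\mu_N}(0))$. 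Since the closed loop \eqref{Setup:closed loop system} is time-invariant, the trajectory through $x_{\mu_N}(n)$ is the shifted tail of $x_{\mu_N}(\cdot)$, so the same estimate holds at every $n$; feasibility causes no trouble here as \eqref{Setup:thm:aposteriori:eq1} is assumed for all $n\in\N_0$ (it is where the viability assumption enters).

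For part~(ii) set $v_n:=V_N(x_{\mu_N}(n))$. Inserting $\l(x,u)\ge\alpha_3(\|x\|_{\xs})$ into \eqref{Setup:thm:aposteriori:eq1} and using $\|x\|_{\xs}\ge\alpha_2^{-1}(V_N(x))$ (from $V_N(x)\le\alpha_2(\|x\|_{\xs})$) produces the scalar one-step bound
\[
	v_{n+1} \ \le\ v_n-\alpha\,\big(\alpha_3\circ\alpha_2^{-1}\big)(v_n) \ =:\ g(v_n),\qquad n\in\N_0,
\]
where $g$ is continuous, $g(0)=0$, and $g(v)<v$ for $v>0$ because $\alpha_3\circ\alpha_2^{-1}\in\cK_\infty$ and $\alpha>0$, while $g(v_n)\ge v_{n+1}\ge 0$. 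The key step is then a scalar comparison lemma: for such $g$, the recursion $v_{n+1}\le g(v_n)$ forces $v_n\le\beta_V(v_0,n)$ for some $\beta_V\in\cKL$ depending only on $\alpha,\alpha_2,\alpha_3$ (cf.\ \cite{GP2011}). Combining this with $\alpha_1(\|x\|_{\xs})\le V_N(x)$ and $V_N(x_0)\le\alpha_2(\|x_0\|_{\xs})$ yields
\[
	\|x_{\mu_N}(n;x_0)\|_{\xs} \ \le\ \alpha_1^{-1}(v_n) \ \le\ \alpha_1^{-1}\!\big(\beta_V(\alpha_2(\|x_0\|_{\xs}),n)\big) \ =:\ \beta(\|x_0\|_{\xs},n),
\]
and $\beta\in\cKL$ as a composition of $\cK_\infty$ functions with a $\cKL$ function, depending only on $\alpha_1,\alpha_2,\alpha_3,\alpha$; together with $x_{\mu_N}(n)\in\bX$, which is part of the hypothesis, this is exactly \eqref{Setup:def:stability:eq1}.

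The telescoping in part~(i) and the final sandwiching in part~(ii) are routine. The one genuinely delicate point is the scalar comparison lemma: $g$ need not be monotone, so one first passes to a nondecreasing continuous majorant (truncated at $0$) that still satisfies $\bar g(v)<v$ for $v>0$, proves that its iterates $\bar g^{(n)}(r)$ tend to $0$ for each fixed $r$, and only then upgrades this pointwise decay of the iterates to a uniform $\cKL$ bound; keeping the resulting $\beta$ dependent only on $\alpha_1,\alpha_2,\alpha_3,\alpha$ is part of that bookkeeping. I would either cite \cite{GP2011} for this lemma or reproduce its argument.
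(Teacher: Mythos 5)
Your proof is correct and follows essentially the same route the paper relies on: the performance estimate \eqref{Setup:thm:aposteriori:eq2} via the standard relaxed dynamic programming telescoping of \eqref{Setup:thm:aposteriori:eq1} (exactly the argument the paper itself uses for the $m$-step analogue in Proposition \ref{Reducing the optimization horizon using longer control horizons:prop:trajectory estimate}), and the $\cKL$ bound in part (ii) via the standard Lyapunov-type comparison argument that the paper delegates to \cite{GP2011}. Your explicit flagging of the non-monotonicity issue in the scalar comparison step is exactly the delicate point handled in that reference, so nothing is missing.
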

Within Theorem \ref{Setup:thm:aposteriori} the relaxed Lyapunov Inequality \eqref{Setup:thm:aposteriori:eq1} is the key assumption. From the literature, it is well--known that this condition is satisfied for sufficiently long horizons $N$, cf. \cite{JH2005, GMTT2005, AB1995}, and that a suitable $N$ may be computed via methods described in \cite{NP1997,TunaMessinaTeel2006,GPSW2010}. The suboptimality index $\alpha$ in this inequality can be interpreted as a lower bound for the rate of convergence. 

\begin{remark}[Performance measure]
	Inequality \eqref{Setup:thm:aposteriori:eq2} yields a performance bound for the MPC closed loop in comparison to the optimal costs on the infinite horizon. This assessment of the achieved closed loop performance is particularly appealing for $\alpha \notin \{0,1\}$ since MPC with terminal cost yields either (approximately) optimal performance, i.e. $\alpha \approx 1$ or solely stability $\alpha > 0$ without rigorous bounds on the degree of suboptimality.
\end{remark}

%%%%%%%%%%%%%%%%%%%%%%%%%%%%%%%%%%%%%%%%%%%%%%%%%%%%%%%%%%%%%%%%%%%%%%%%%%%%%%%%
%\section{Shortening the prediction horizon by weakening the relaxed Lyapunov inequality}
\section{Shortening the Prediction Horizon by Weakening the Relaxed Lyapunov Inequality}
\label{Section:Reducing the optimization horizon using longer control horizons}
%%%%%%%%%%%%%%%%%%%%%%%%%%%%%%%%%%%%%%%%%%%%%%%%%%%%%%%%%%%%%%%%%%%%%%%%%%%%%%%%

For the described MPC setting we want to guarantee stability and a certain lower bound on the degree of suboptimality with respect to the infinite horizon optimal control law $\mu_\infty(\cdot)$. Yet, at the same time the optimization horizon $N$ shall be as small as possible. These goals oppose each other since it is known from the literature, see, e.g., \cite{JH2005, GMTT2005, AB1995}, that stability can only be guaranteed if the optimization horizon is sufficiently long. Keeping $N$ small, however, is important from a practical point of view since the horizon length is the dominating factor regarding the computational burden. Here, motivated by theoretical results deduced in \cite{GPSW2010}, our goal is to improve the stability and suboptimality bounds from \cite[Proposition 7.6]{GP2011} and, thereby, to allow the use of smaller prediction horizons.

Note that Condition \eqref{Setup:thm:aposteriori:eq1} is a sufficient, yet not a necessary condition. In particular, even if stability and the desired performance Estimate \eqref{Setup:thm:aposteriori:eq2} cannot be guaranteed via Theorem \ref{Setup:thm:aposteriori}, stable and satisfactory behavior of the closed loop may still be observed, even in the linear case, as illustrated by the following example.
\begin{example}\label{Reducing the optimization horizon using longer control horizons:ex1}
	Let $\bX = X = \R^2$, $\bU = U = \R$ be given and consider the linear control system $x(n+1) = Ax(n) + Bu(n)$ with quadratic stage cost function $\l(x, u) := x^\top Q x + u^\top R u$ from \cite{SX1997} with
	\begin{align*}
		A = \begin{pmatrix}
			  1 & 1.1 \\ -1.1 & 1
		    \end{pmatrix},
		\qquad
		B = \begin{pmatrix}
			  0 \\ 1
		    \end{pmatrix},
		\qquad
		Q = \begin{pmatrix}
			  1 & 0 \\ 0 & 1
		    \end{pmatrix},
		\quad \text{and} \quad
		R = \begin{pmatrix}
			  1
		    \end{pmatrix}.
	\end{align*}
	For the given system, the optimal finite horizon costs $V_N(x_0) = x_0^T P_N x_0$ be can be computed via the Riccati difference equation using the initial condition $P_1 = Q$ according to
	\begin{align*}
		 P_{j+1} = A^\top \left[ P_j - P_j B \left( B^\top P_j B + R \right)^{-1} B^\top P_j \right] A + Q.
	\end{align*}
	Then, the resulting open loop control law $u_N(\cdot, x)$ is given by
	\begin{align*}
		u_N(k; x) = - \left( B^\top P_{N - k} B + R \right)^{-1} B^\top P_{N - k} A x_{u_N}(k; x),
	\end{align*}
	$k=0,1,\ldots,N-2$, and $u_N(N-1;x) = 0$, cf. \cite{NP1997}. Now, we implement the Riccati feedback in the usual receding horizon fashion $\mu_N(x(n)) := u_N(0; x(n))$ with $N = 3$ and evaluate the suboptimality bound from Theorem \ref{Setup:thm:aposteriori} along the closed loop. As displayed in Figure \ref{Reducing the optimization horizon using longer control horizons:fig:problematic points}(left) a typical trajectory tends towards the origin quickly. However, Figure \ref{Reducing the optimization horizon using longer control horizons:fig:problematic points}(right) shows that for the set of initial values $\mathcal{X} := \{ \left( \cos(2 \pi k / k_{\max}), \sin(2 \pi k / k_{\max}) \right) \mid k = 1, 2, \ldots, k_{\max} \}$ with $k_{\max} = 2^7$ there exists a nonempty subset of initial values for which we obtain $\alpha < 0$ in \eqref{Setup:thm:aposteriori:eq1}. Hence, stability cannot be deduced from Theorem \ref{Setup:thm:aposteriori}. Here, we like to mention that stability can be guaranteed for all initial values $x_0 \in \mathcal{X}$ by means of Theorem \ref{Setup:thm:aposteriori} if we choose $N = 4$. Note that in \cite[Section 6]{NP1997} it has been shown in a similar manner that the closed loop is stable if $N \geq 5$. We like to point out that the approach presented in \cite{NP1997} for the unconstrained linear quadratic case exploits the connection between stability properties of the system reflected by the solution of the corresponding Riccati equation and the monotonicity of the cost function, see also \cite{BGW1990}. 
\end{example}
\begin{figure}[!ht]
	\begin{center}
		\includegraphics[width=5cm]{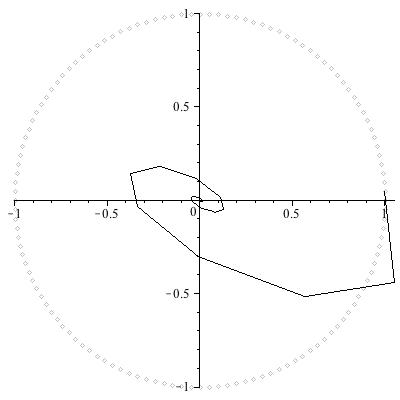}
		\hspace*{1cm}
		\includegraphics[width=5cm]{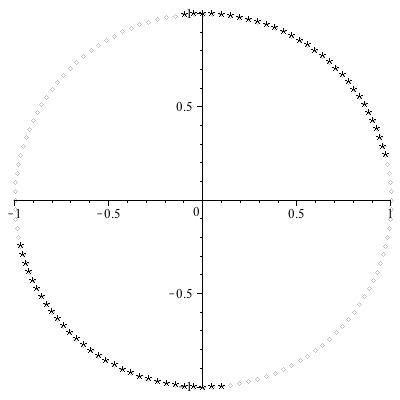}
		\caption{Left: Illustration of the used set of initial values (grey) together with a typical closed loop solution (black). Right: Illustration of the subset of initial values (black) of $\mathcal{X}$ (grey) for which $\alpha \geq 0$ in \eqref{Setup:thm:aposteriori:eq1} does not hold.}
		\label{Reducing the optimization horizon using longer control horizons:fig:problematic points}
	\end{center}
\end{figure}

Motivated by the fact that the computational effort grows rapidly with respect to the prediction horizon $N$, it is not desireable to choose $N$ larger than necessary to guarantee the relaxed Lyapunov inequality \eqref{Setup:thm:aposteriori:eq1} to hold. Our goal in this work is to develop an algorithm which allows us to check whether stability and performance in the sense of Theorem \ref{Setup:thm:aposteriori} can be guaranteed for a prediction horizon which is smaller than the prediction horizon needed in order to ensure \eqref{Setup:thm:aposteriori:eq1}. In order to further motivate this approach, Example \ref{Reducing the optimization horizon using longer control horizons:ex1} is considered again.

\begin{example}\label{Setup:ex:motivation m step}
%	Implement the first two MPC steps and consider the quantity
%	\begin{align*}
%		\alpha_{N,2}(x_0) := \frac {V_N(x_0) - V_N(x_{\mu_N}(2;x_0))}{\sum_{n=0}^{1} \ell(x_{\mu_N}(n;x_0),\mu_N(x_{\mu_N}(n;x_0)))}.
%	\end{align*}
%	For each $x_0 \in \mathcal{X}$ we have that $\alpha_{N,2}(x_0) > 0$ holds. Hence, defining $\alpha := \inf_{x_0 \in \mathcal{X}} \alpha_{N,2}(x_0) > 0$ yields
%	\begin{align}
%		\label{Setup:ex:motivation m step:eq1}
%		V_N(x_0) \geq V_N(x_{\mu_N}(2;x_0)) + \alpha \left( \sum_{n=0}^{1} \ell(x_{\mu_N}(n;x_0),\mu_N(x_{\mu_N}(n;x_0))) \right),
%	\end{align}
%	i.e., the relaxed Lyapunov inequality with a positive suboptimality index $\alpha$ after two steps. Indeed, this conclusion holds true for all $x \in \bX = X$ which can be shown by using the MPC feedback computed in Example \ref{Reducing the optimization horizon using longer control horizons:ex1}. 
	Instead of repeatedly considering only the first MPC step to evaluate $\alpha$ via \eqref{Setup:thm:aposteriori:eq1}, we define the respective quantity for every second MPC step via
	\begin{align*}
		\alpha_{N,2}(x_0) := \frac {V_N(x_0) - V_N(x_{\mu_N}(2;x_0))}{\sum_{n=0}^{1} \ell(x_{\mu_N}(n;x_0),\mu_N(x_{\mu_N}(n;x_0)))}
	\end{align*}
	and obtain $\alpha_{N,2}(x_0) > 0$ for each $x_0 \in \mathcal{X}$. Hence, defining $\alpha := \inf_{x_0 \in \mathcal{X}} \alpha_{N,2}(x_0) > 0$ yields
	\begin{align}
		\label{Setup:ex:motivation m step:eq1}
		V_N(x_0) \geq V_N(x_{\mu_N}(2;x_0)) + \alpha \left( \sum_{n=0}^{1} \ell(x_{\mu_N}(n;x_0),\mu_N(x_{\mu_N}(n;x_0))) \right),
	\end{align}
	i.e., the relaxed Lyapunov inequality with a positive suboptimality index $\alpha$ after two steps. Indeed, this conclusion holds true for all $x \in \bX = X$ which can be shown by using the MPC feedback computed in Example \ref{Reducing the optimization horizon using longer control horizons:ex1}. Hence, despite our conditions to be trajectory based, for this example asymptotic stability in the sense of Definition \ref{Setup:def:stability} can be concluded.
\end{example}

In particular, Example \ref{Setup:ex:motivation m step} shows that a generalized type of relaxed Lyapunov inequality similar to \eqref{Setup:ex:motivation m step:eq1} may hold after implementing several controls despite the fact that the central assumption \eqref{Setup:thm:aposteriori:eq1} of Theorem \ref{Setup:thm:aposteriori} is violated. Checking the relaxed Lyapunov inequality \eqref{Setup:thm:aposteriori:eq1} less often may help in order to ensure desired stability properties of the resulting closed loop. %\sout{Note that in a more complicated setting than the one considered in Example \ref{Setup:ex:motivation m step} it may not be possible to guarantee such an inequality for a set of initial conditions. Hence, implementing an MPC feedback law although \eqref{Setup:thm:aposteriori:eq1} is violated may lead to severe stability problems.} 
We aim at designing a strategy which ensures -- a priori and at runtime of the corresponding MPC algorithm -- that a relaxed Lyapunov inequality is fulfilled after $m \in \{1,2,\ldots,N-1\}$ steps.

Our first attempt is motivated by an observation from \cite[Section 7]{GPSW2010}. In this reference estimates for the suboptimality degree $\alpha$ for a set of initial conditions are deduced and the following fact has been proven: if more than one element of the computed sequence of control values is applied, then the suboptimality estimate $\alpha$ is increasing (up to a certain point). To incorporate this idea into our MPC scheme, we first need to extend our notation. The list $\cS = (\sigma(0), \sigma(1), \ldots) \subseteq \N_{0}$ is introduced, which is assumed to be in ascending order, in order to indicate time instants at which the control sequence is updated. The closed loop solution at time instant $\sigma(n)$ is denoted by $x_{n} = x_{\mu_{N}}(\sigma(n))$. Furthermore, the abbreviation $m_{n} := \sigma(n + 1) - \sigma(n)$, i.e., the time between two MPC updates, is used. Hence, 
\begin{align}
	\label{Reducing the optimization horizon using longer control horizons:eq:closed loop}
	x_{\mu_{N}}(\sigma(n) + m_{n}) = x_{\mu_{N}}(\sigma(n + 1)) = x_{n + 1}
\end{align}
holds. This enables us -- in view of Bellman's principle of optimality -- to define the closed loop control
\begin{align}
	\label{Reducing the optimization horizon using longer control horizons:eq:closed loop control}
	\mu_{N}^{\cS}(\cdot; x_{n}) := \argmin_{\{u \in \bU^{m_{n}}:x_u(k;x_n) \in \bX\ \text{ for $k = 1,2,\ldots,m_n$}\}} \Big\{ V_{N - m_{n}}(x_{u}(m_{n}; x_{n})) + \sum\limits_{k = 0}^{m_{n} - 1} \l(x_{u}(k; x_{n}), u(k)) \Big\}.
\end{align}

Describing the fact shown in \cite[Section 7]{GPSW2010} more precisely, a lower bound on the degree of suboptimality $\alpha_{N, m_{n}}$ relative to the horizon length $N$ and the number of controls to be implemented $m_{n}$ can be obtained. This bound allows for measuring the tradeoff between the infinite horizon cost induced by the MPC feedback law $\mu_{N}^{\cS}(\cdot; \cdot)$ similar to Theorem \ref{Setup:thm:aposteriori}, i.e.
\begin{align}
	\label{Reducing the optimization horizon using longer control horizons:eq:value function mu_{N}}
	V_{\infty}^{\mu_{N}^{\cS}} (x_{0}) := \sum\limits_{n = 0}^\infty\sum\limits_{k = 0}^{m_{n} - 1} \l\left( x_{\mu_{N}^{\cS}}(k; x_{n}), \mu_{N}^{\cS}(k; x_{n}) \right),
\end{align}
and the infinite horizon optimal value function $V_{\infty}(x_0)$ evaluated at $x_0$. %In particular, it has been shown in \cite[Theorem 4.2]{GPSW2010} that given a controllability condition, i.e., for each $x_{0}$ there exists a control $u_{x_{0}}$ such that
% \begin{align}
% 	\label{Reducing the optimization horizon using longer control horizons:eq:exponential controllability assumption}
% 	x_{u_{x_{0}}}(n;x_{0}) \in \bX \qquad\text{and}\qquad \l(x_{u_{x_{0}}}(n; x_{0}),u_{x_{0}}(n; x_{0})) \leq C \sigma^{n} \min_{u \in \mathbb{U}} \l(x_{0}, u)
% \end{align}
% holds with $C \geq 1$, $\sigma \in (0,1)$, then there exists a prediction horizon $N \in \mathbb{N}_{\geq 2}$ and $\overline{m} \in \{1,\ldots,\lfloor N/2 \rfloor\}$ such that $\alpha_{N,\overline{m}} \geq \overline{\alpha}$ for an arbitrarily specified $\overline{\alpha} \in (0,1)$. Indeed, since $\alpha_{N, m}$ converges to one for $N \rightarrow \infty$ this is always possible, cf. \cite[Corollary 6.1]{GPSW2010}.
%We point out that the results shown in \cite{GPSW2010} ensure stability for a set of initial values. 
We point out that the results shown in \cite{GPSW2010} ensure stability for a set of initial values. Hence, this approach may lead to a conservative performance estimate at least with respect to parts of the state space. Here, we extend \eqref{Setup:thm:aposteriori:eq1} to an $m$-step relaxed Lyapunov inequality which is similar to \cite{GPSW2010} but applied in a trajectory based setting. Note that the controllability condition, which was used in order to derive these results, is, in general, difficult to check if state constraints have to be taken into account.
\begin{proposition}\label{Reducing the optimization horizon using longer control horizons:prop:trajectory estimate}
	Let a list $\cS = (\sigma(n))_{n \in \N_0} \subseteq \N_0$ such that $m_n=\sigma(n+1)-\sigma(n) \in \{1,2,\ldots,N-1\}$ holds be given. Consider the open loop system $x_{u_{N}}(\cdot; \cdot)$ of \eqref{Setup:open loop system}, the feedback law $\mu_{N}^{\cS}(\cdot; \cdot)$, the closed loop trajectory $x_{n}$, $n \in \N_0$, of \eqref{Reducing the optimization horizon using longer control horizons:eq:closed loop} with initial value $x_0 \in \bX$ and a fixed $\overline{\alpha} \in (0, 1]$. If there exists a function $V_{N}: \bX \rightarrow \R_{0}^{+}$ satisfying
	\begin{align}
		\label{Reducing the optimization horizon using longer control horizons:prop:trajectory estimate:eq1}
		V_{N}(x_{n}) \geq V_{N}(x_{n + 1}) + \overline{\alpha} \sum\limits_{k = 0}^{m_{n} - 1} \l(x_{u_{N}}(k; x_{n}), u_{N}(k; x_{n}))
	\end{align}
	with $m_{n} \in \{1, \ldots, N - 1\}$ for all $n \in \N_{0}$, then
	\begin{align}
		\label{Reducing the optimization horizon using longer control horizons:prop:trajectory estimate:eq2}
		\overline{\alpha} V_{\infty}(x_{0}) \leq \overline{\alpha} V_{\infty}^{\mu_{N}^{\cS}}(x_{0}) \leq V_{N}(x_{0}) \leq V_{\infty}(x_{0})
	\end{align}
	holds for $\mu_{N}^{\cS}(\cdot; \cdot)$ given by \eqref{Reducing the optimization horizon using longer control horizons:eq:closed loop control} for all $n \in \N_{0}$.
\end{proposition}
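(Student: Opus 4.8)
The plan is to run the standard relaxed-dynamic-programming telescoping argument -- as in the proof of the one-step statement, Theorem~\ref{Setup:thm:aposteriori} -- but on the ``macro'' time grid $\cS$ instead of at every sampling instant.

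First I would reconcile the two objects entering the statement: the open-loop optimal control $u_{N}(\cdot;x_{n})$ appearing in \eqref{Reducing the optimization horizon using longer control horizons:prop:trajectory estimate:eq1}, and the feedback $\mu_{N}^{\cS}(\cdot;x_{n})$ from \eqref{Reducing the optimization horizon using longer control horizons:eq:closed loop control}. By the dynamic programming principle the tail of an $N$-horizon optimal control is optimal for the corresponding shorter-horizon problem, whence
\[
	V_{N}(x_{n})=\sum_{k=0}^{m_{n}-1}\ell\big(x_{u_{N}}(k;x_{n}),u_{N}(k;x_{n})\big)+V_{N-m_{n}}\big(x_{u_{N}}(m_{n};x_{n})\big),
\]
and the first $m_{n}$ components of $u_{N}(\cdot;x_{n})$ constitute an admissible choice for $\mu_{N}^{\cS}(\cdot;x_{n})$ in \eqref{Reducing the optimization horizon using longer control horizons:eq:closed loop control}; I therefore take them as the implemented feedback. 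With this choice $x_{n+1}=x_{u_{N}}(m_{n};x_{n})$, and the closed-loop stage cost accumulated between the updates $\sigma(n)$ and $\sigma(n+1)$ equals $\ell_{n}^{\cS}:=\sum_{k=0}^{m_{n}-1}\ell(x_{u_{N}}(k;x_{n}),u_{N}(k;x_{n}))$, so that \eqref{Reducing the optimization horizon using longer control horizons:prop:trajectory estimate:eq1} becomes $V_{N}(x_{n})\ge V_{N}(x_{n+1})+\overline{\alpha}\,\ell_{n}^{\cS}$.

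Next I would telescope. Summing the last inequality over $n=0,\dots,K-1$ gives $V_{N}(x_{0})-V_{N}(x_{K})\ge\overline{\alpha}\sum_{n=0}^{K-1}\ell_{n}^{\cS}$, and dropping the nonnegative term $V_{N}(x_{K})$ yields $V_{N}(x_{0})\ge\overline{\alpha}\sum_{n=0}^{K-1}\ell_{n}^{\cS}$ for all $K\in\N$. Letting $K\to\infty$ -- the partial sums increase because $\ell\ge0$ -- and invoking the definition \eqref{Reducing the optimization horizon using longer control horizons:eq:value function mu_{N}} of $V_{\infty}^{\mu_{N}^{\cS}}$ gives the middle inequality $\overline{\alpha}\,V_{\infty}^{\mu_{N}^{\cS}}(x_{0})\le V_{N}(x_{0})$. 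The two outer inequalities are immediate: $V_{\infty}(x_{0})\le V_{\infty}^{\mu_{N}^{\cS}}(x_{0})$ because $V_{\infty}$ is the infimum of $J_{\infty}(x_{0},\cdot)$ over all admissible control sequences and the closed loop generated by $\mu_{N}^{\cS}$ is one of them, whereas $V_{N}(x_{0})\le V_{\infty}(x_{0})$ because truncating a control after $N$ steps only discards nonnegative summands. Multiplying the first outer inequality by $\overline{\alpha}\in(0,1]$ and chaining everything yields \eqref{Reducing the optimization horizon using longer control horizons:prop:trajectory estimate:eq2}; running the same argument from $x_{n}$ in place of $x_{0}$ (the shifted list $(\sigma(j))_{j\ge n}$ still fulfils the hypothesis) gives the estimate at each $x_{n}$.

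The only genuinely delicate point I anticipate is the bookkeeping of the first step: ensuring that the open-loop-optimal control and the implemented feedback are chosen consistently, so that the running cost in \eqref{Reducing the optimization horizon using longer control horizons:prop:trajectory estimate:eq1} coincides with the closed-loop running cost and $x_{n+1}$ is the correct successor state. This goes hand in hand with the tacit use of the viability Assumption, which guarantees that every finite-horizon problem met along the closed loop is feasible, hence that $V_{N}$ and the closed-loop trajectory are well defined throughout. Beyond this, no analytic difficulty is expected; the estimate is obtained purely from telescoping and the nonnegativity of $\ell$.
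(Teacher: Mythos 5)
Your proof is correct and follows essentially the same route as the paper's: telescoping the relaxed Lyapunov inequality \eqref{Reducing the optimization horizon using longer control horizons:prop:trajectory estimate:eq1}, dropping the nonnegative term $V_N(x_{K})$, passing to the limit to bound $\overline{\alpha}\,V_\infty^{\mu_N^{\cS}}(x_0)$ by $V_N(x_0)$, and obtaining the outer inequalities from the principle of optimality. Your explicit reconciliation of $u_N(\cdot;x_n)$ with $\mu_N^{\cS}(\cdot;x_n)$ via Bellman's principle is a point the paper leaves implicit (``by definition of $\mu_N^{\cS}$''), but it is the same argument.
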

\begin{proof}
	Reordering \eqref{Reducing the optimization horizon using longer control horizons:prop:trajectory estimate:eq1}, we obtain $\overline{\alpha} \sum_{k = 0}^{m_{n} - 1} \l(x_{u_{N}}(k; x_{n}), u_{N}(k; x_{n})) \leq V_{N}(x_{n}) - V_{N}(x_{n + 1})$. Summing over all the first $n \in \N_{0}$ time instants yields
	\begin{align*}
		\overline{\alpha} \sum_{i = 0}^{n} \sum_{k = 0}^{m_i - 1} \l(x_{u_{N}}(k; x_i), u_{N}(k; x_i)) \leq V_{N}(x_{0}) - V_{N}(x_{n + 1}) \leq V_{N}(x_{0}).
	\end{align*}
	Hence, by definition of $\mu_{N}^{\cS}(\cdot; \cdot)$ in \eqref{Reducing the optimization horizon using longer control horizons:eq:closed loop control} and $V_{\infty}^{\mu_{N}^{\cS}} (\cdot)$ in \eqref{Reducing the optimization horizon using longer control horizons:eq:value function mu_{N}}, taking $n$ to infinity implies the second inequality in \eqref{Reducing the optimization horizon using longer control horizons:prop:trajectory estimate:eq2}. The first and the last inequality in \eqref{Reducing the optimization horizon using longer control horizons:prop:trajectory estimate:eq2} follow by the definition of the value functions $V_N$ and $V_\infty$ %principle of optimality
	which concludes the proof.
\end{proof}
	
An implementation which aims at guaranteeing a fixed lower bound of the degree of suboptimality $\overline{\alpha}$ in the sense of Proposition \ref{Reducing the optimization horizon using longer control horizons:prop:trajectory estimate} may take the following form:
\begin{fshaded}
	\begin{algorithm}\label{Reducing the optimization horizon using longer control horizons:alg:basic algorithm}
	Given state $x_{0} := x$, $n = 0$, list $\cS = (n)$, $N \in \mathbb{N}_{\geq 2}$ and $\overline{\alpha} \in (0, 1]$
	\begin{itemize}
		\item[(I)] Set $j := 0$ and compute $u_{N}(\cdot; x_{n})$ and $V_{N}(x)$. Do
		\begin{itemize}
			\item[(a)] Set $j := j + 1$, compute $V_{N}(x_{u_{N}}(j; x_{n}))$
			\item[(b)] Compute $\alpha = \{ \overline{\alpha} \mid \overline{\alpha} \text{ maximally satisfies \eqref{Reducing the optimization horizon using longer control horizons:prop:trajectory estimate:eq1} with $m_n := j$} \}$
			\item[(c)] If $\alpha \geq \overline{\alpha}$: Set $m_{n} := j$ and goto (II)
			\item[(d)] If $j = N - 1$: Set $m_{n}$ according to exit strategy and goto (II)
		\end{itemize}
		while $\alpha < \overline{\alpha}$
		\item[(II)] For $j = 1, \ldots, m_{n}$ do
		\begin{itemize}
			\item[] Implement $\mu_{N}^{\cS}(j - 1; x_{n}) := u_{N}(j-1; x_{n})$
		\end{itemize}
		\item[(III)] Set $\cS := (\cS, \mbox{back}(\cS) + m_{n})$, $x_{n + 1} := x_{\mu_{N}^{\cS}}(m_{n}; x_{n})$, $n := n + 1$ and goto (I)
	\end{itemize}
	\end{algorithm}
	\vspace*{-0.1cm}
\end{fshaded}

Here, we adopted the programming notation \textit{back} which allows for fast access to the last element of a list. Note that $\cS$ is built up during runtime of the algorithm and not known in advance. Hence, $\cS$ is always ordered.
\begin{remark}\label{Reducing the optimization horizon using longer control horizons:rem:exit strategy}
	If \eqref{Reducing the optimization horizon using longer control horizons:prop:trajectory estimate:eq1} is not satisfied for $j \leq N - 1$, an exit strategy has to be used since the performance bound $\overline{\alpha}$ cannot be guaranteed. In order to cope with this issue, there exist remedies, e.g., one may increase the prediction horizon and repeat Step (I). While local validity of \eqref{Reducing the optimization horizon using longer control horizons:prop:trajectory estimate:eq1} can be ensured for sufficiently large $N$, extending the prediction horizon typically results in prolonged computing times. Therefore, if realtime guarantees are required, a respective bound for the initial length of the horizon needs to be satisfied. %Unfortunately,
	Additionally, the proof of Proposition \ref{Reducing the optimization horizon using longer control horizons:prop:trajectory estimate} cannot be applied in this context due to the prolongation of the horizon. Yet, it can be replaced by estimates from \cite{GP2011,G2010} for varying prediction horizons to obtain a result similar to \eqref{Reducing the optimization horizon using longer control horizons:prop:trajectory estimate:eq2}. Alternatively, one may continue with the algorithm. If the exit strategy does not have to be called again, the algorithm guarantees the desired performance for $x_{n}$ instead of $x_{0}$, i.e., from that point on.
\end{remark}
Utilizing Algorithm \ref{Reducing the optimization horizon using longer control horizons:alg:basic algorithm} the following result shows asymptotically stable behavior of the computed state trajectory:
\begin{theorem}\label{Reducing the optimization horizon using longer control horizons:thm:stability}
	Suppose a control system \eqref{Setup:nonlinear discrete time system} with initial value $x_{0} \in \bX$ and $\overline{\alpha} \in (0, 1]$ to be given and apply Algorithm \ref{Reducing the optimization horizon using longer control horizons:alg:basic algorithm}. Assume that for each iterate $n \in \N_{0}$ condition $\alpha \geq \overline{\alpha}$ in Step (Ic) of Algorithm \ref{Reducing the optimization horizon using longer control horizons:alg:basic algorithm} is satisfied for some $j \in \{ 1, \ldots, N - 1 \}$. Then, the closed loop trajectory corresponding to the closed loop control $\mu_{N}^{\cS}(\cdot; \cdot)$ resulting form Algorithm \ref{Reducing the optimization horizon using longer control horizons:alg:basic algorithm} satisfies the performance estimate \eqref{Reducing the optimization horizon using longer control horizons:prop:trajectory estimate:eq2}. If, in addition, the conditions of Theorem \ref{Setup:thm:aposteriori}(ii) hold, then $x_{\mu_{N}^{\cS}}(\cdot)$ behaves like a trajectory of an asymptotically stable system.
\end{theorem}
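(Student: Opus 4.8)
The plan is to split the statement into its two assertions and handle the performance estimate by reducing to Proposition~\ref{Reducing the optimization horizon using longer control horizons:prop:trajectory estimate}, and the asymptotic stability claim by a Lyapunov argument at the update instants together with a uniform bound on the intervening open-loop arcs. First I would unpack what the running hypothesis ``$\alpha\geq\overline\alpha$ in Step~(Ic) for every $n$'' actually provides. By construction, $\alpha$ in Step~(Ib) is the largest constant for which \eqref{Reducing the optimization horizon using longer control horizons:prop:trajectory estimate:eq1} holds with $m_n:=j$; since $\l\geq 0$, the right-hand side of \eqref{Reducing the optimization horizon using longer control horizons:prop:trajectory estimate:eq1} is monotone in that constant, so $\alpha\geq\overline\alpha$ implies \eqref{Reducing the optimization horizon using longer control horizons:prop:trajectory estimate:eq1} also holds with $\overline\alpha$ for the value $m_n$ selected in Step~(Ic). (The only degenerate case, a vanishing denominator defining $\alpha$, forces $x_n=\xs$ by positive definiteness of $\l$, and the closed loop then rests at the equilibrium.) Since the list $\cS$ built by Algorithm~\ref{Reducing the optimization horizon using longer control horizons:alg:basic algorithm} is ascending with $m_n=\sigma(n+1)-\sigma(n)\in\{1,\ldots,N-1\}$, and the closed loop stays in $\bX$ by the viability Assumption, all hypotheses of Proposition~\ref{Reducing the optimization horizon using longer control horizons:prop:trajectory estimate} are met with the constant $\overline\alpha$. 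Invoking it yields \eqref{Reducing the optimization horizon using longer control horizons:prop:trajectory estimate:eq2}, which is the first assertion.

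For the stability claim I would first derive a $\cKL$-estimate at the update instants $x_n=x_{\mu_N^{\cS}}(\sigma(n))$. Dropping all but the $k=0$ summand in \eqref{Reducing the optimization horizon using longer control horizons:prop:trajectory estimate:eq1} and using $\l(x_n,u_N(0;x_n))\geq\alpha_3(\|x_n\|_{\xs})$ together with $\alpha_1(\|x\|_{\xs})\leq V_N(x)\leq\alpha_2(\|x\|_{\xs})$ gives the recursion $V_N(x_{n+1})\leq V_N(x_n)-\overline\alpha\,\alpha_3(\alpha_2^{-1}(V_N(x_n)))$, which is precisely the situation treated in the proof of Theorem~\ref{Setup:thm:aposteriori}(ii) (cf.\ \cite{GP2011}); hence there is $\tilde\beta\in\cKL$, depending only on $\alpha_1,\alpha_2,\alpha_3,\overline\alpha$, with $\|x_n\|_{\xs}\leq\tilde\beta(\|x_0\|_{\xs},n)$ for all $n\in\N_0$. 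Next I would control the intervening points $x_{\mu_N^{\cS}}(\sigma(n)+k)=x_{u_N}(k;x_n)$ for $0\leq k<m_n$: from $\sum_{j=0}^{N-1}\l(x_{u_N}(j;x_n),u_N(j;x_n))=V_N(x_n)\leq\alpha_2(\|x_n\|_{\xs})$ each individual stage cost is bounded by $\alpha_2(\|x_n\|_{\xs})$, so (applying the lower bound on $\l$ along the arc, which lies in $\bX$) $\|x_{u_N}(k;x_n)\|_{\xs}\leq\alpha_3^{-1}(\alpha_2(\|x_n\|_{\xs}))$.

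Finally I would stitch these bounds. Since $1\leq m_i\leq N-1$, a true time $t=\sigma(n)+k$ with $0\leq k<m_n$ satisfies $n\geq t/(N-1)-1$, so $\|x_{\mu_N^{\cS}}(t)\|_{\xs}\leq\alpha_3^{-1}\bigl(\alpha_2\bigl(\tilde\beta(\|x_0\|_{\xs},\max\{0,t/(N-1)-1\})\bigr)\bigr)$; after the standard comparison-function manipulation (taking a maximum with a decaying term dominating for small $t$, and noting that $t\mapsto\max\{0,t/(N-1)-1\}$ is nondecreasing and unbounded) the right-hand side can be rewritten as $\beta(\|x_0\|_{\xs},t)$ for a suitable $\beta\in\cKL$, which is \eqref{Setup:def:stability:eq1}. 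I expect the last bookkeeping step to be the main obstacle: one must be careful that the inter-sample bound is expressed through a genuine $\cKL$ function of the physical time rather than the update index, and, since Theorem~\ref{Setup:thm:aposteriori}(ii) is stated for one-step MPC, that the Lyapunov recursion and the $\alpha_1,\alpha_3$ bounds are available along the multistep closed loop and its open-loop arcs, not merely at one-step closed-loop samples.
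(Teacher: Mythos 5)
Your proposal is correct and follows essentially the same route as the paper: the performance estimate is obtained by verifying the hypotheses of Proposition \ref{Reducing the optimization horizon using longer control horizons:prop:trajectory estimate} with $\overline{\alpha}$ (which Step~(Ic) guarantees by monotonicity in the suboptimality constant), and asymptotic stability follows by standard direct Lyapunov techniques as in the proof of Theorem \ref{Setup:thm:aposteriori}(ii). The only difference is that you spell out the multistep Lyapunov bookkeeping (inter-sample bound via the stage-cost lower bound and the reindexing from update instants to physical time) which the paper delegates to \cite[Theorem 7.6]{GP2011}; this elaboration is sound.
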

\begin{proof}
	The algorithm constructs the set $\cS$. Since Step (Ic) is satisfied for some $j \in \{ 1, \ldots, N - 1 \}$ the assumptions of Proposition \ref{Reducing the optimization horizon using longer control horizons:prop:trajectory estimate}, i.e., Inequality \eqref{Reducing the optimization horizon using longer control horizons:prop:trajectory estimate:eq1} for $x_{n} = x_{\mu_{N}^{\cS}}(\sigma(n))$, are satisfied. Hence, by Proposition \ref{Reducing the optimization horizon using longer control horizons:prop:trajectory estimate}, the performance Estimate \eqref{Reducing the optimization horizon using longer control horizons:prop:trajectory estimate:eq2} follows for the control sequence $\mu_{N}^{\cS}(\cdot; \cdot)$. Last, similar to the proof of Theorem \ref{Setup:thm:aposteriori} given in \cite[Theorem 7.6]{GP2011}, standard direct Lyapunov techniques can be applied to conclude asymptotically stable behavior of the closed loop trajectory $x_{\mu_{N}^{\cS}}(\cdot)$.
\end{proof}
\begin{example}\label{Reducing the optimization horizon using longer control horizons:ex2}
	Consider Example \ref{Reducing the optimization horizon using longer control horizons:ex1} in the context of Theorem \ref{Reducing the optimization horizon using longer control horizons:thm:stability}. Recall that using the Riccati based open loop control law $u_N(\cdot; x(n))$ in the standard MPC fashion $\mu_N(x(n)) := u_N(0; x(n))$ for horizon length $N = 3$ together with Theorem \ref{Setup:thm:aposteriori}, there exist initial values for which $\alpha > 0$ cannot be guaranteed. Now, if we use Algorithm \ref{Reducing the optimization horizon using longer control horizons:alg:basic algorithm} instead, i.e. we allow for $m_n > 1$, then we obtain $\alpha > \overline{\alpha} = 0$ for each initial value $x_0 \in X$. In Figure \ref{Reducing the optimization horizon using longer control horizons:fig:decrease in V} we illustrated the impact of changing $m_n$ on the difference $V_N(x_n) - V_N(x_{n+1})$ for $N = 3$. In particular, Figure \ref{Reducing the optimization horizon using longer control horizons:fig:decrease in V}(left) shows that there exists regions in the state space where the value function is increasing whereas $V_3(\cdot)$ in Figure \ref{Reducing the optimization horizon using longer control horizons:fig:decrease in V}(right) is always decreasing, i.e. all trajectories converge towards the origin.
	\begin{figure}[!ht]
		\begin{center}
			\includegraphics[width=5cm]{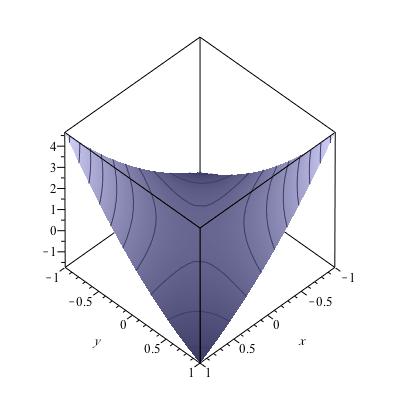}
			\hspace*{0.5cm}
			\includegraphics[width=5cm]{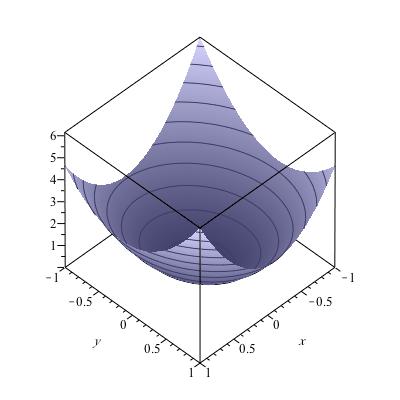}
		\end{center}
		\caption{Left: Difference $V_3(x_n) - V_3(x_{n+1})$ with $m_n = 1$. Right: Difference $V_3(x_n) - V_3(x_{n+1})$ for with $m_n = 2$.}
		\label{Reducing the optimization horizon using longer control horizons:fig:decrease in V}
	\end{figure}

	Using $\alpha_1(\| x \|) = \alpha_3(\| x \|) = \| x \|^2 = \min_{u \in U} \ell(x,u) \leq V_N(x) = x^T P_N x \leq \overline{\lambda}(P) \| x \|^2 = \alpha_2(\| x \|)$ where $\overline{\lambda}$ denotes the largest Eigenvalue of the matrix $P_N$ ensures the assumptions of Theorem \ref{Setup:thm:aposteriori}(ii). Hence, applying Theorem \ref{Reducing the optimization horizon using longer control horizons:thm:stability} enables us to conclude asymptotic stability of the closed loop generated by Algorithm \ref{Reducing the optimization horizon using longer control horizons:alg:basic algorithm}. 
\end{example}

Example \ref{Reducing the optimization horizon using longer control horizons:ex2} indicates that checking the relaxed Lyapunov inequality less often may allow us to maintain stability for a reduced prediction horizon. We point out that in Example \ref{Reducing the optimization horizon using longer control horizons:ex2} condition \eqref{Reducing the optimization horizon using longer control horizons:prop:trajectory estimate:eq1} has been ensured in advance, i.e., before implementing a control input at the plant. Algorithm \ref{Reducing the optimization horizon using longer control horizons:alg:basic algorithm} may vary the number of open loop control values to be implemented during runtime at each MPC iteration. In particular, the system may stay in open loop for more than one sampling period in order to guarantee \eqref{Reducing the optimization horizon using longer control horizons:prop:trajectory estimate:eq1} to hold. Such a procedure may lead to severe problems in terms of robustness, cf. \cite{B1995, MS2007}. Hence, from a practical point of view it is preferable to close the control loop as often as possible, i.e., $m_{n} = 1$ for all $n \in \N_{0}$. Furthermore, for many applications stable behavior of the closed loop is observed for $m_{n} = 1$ even if stability cannot be guaranteed via \eqref{Setup:thm:aposteriori:eq1}, cf. Example \ref{Setup:ex:motivation m step}. In the following, we deduce conditions which allow the control loop to be closed more often compared to Algorithm \ref{Reducing the optimization horizon using longer control horizons:alg:basic algorithm} while maintaining stability.

%%%%%%%%%%%%%%%%%%%%%%%%%%%%%%%%%%%%%%%%%%%%%%%%%%%%%%%%%%%%%%%%%%%%%%%%%%%%%%%%
%\section{Robustification by closing the loop more often}
\section{Robustification by Closing the Loop more often}
\label{Section:Robustification by reducing the control horizon}
%%%%%%%%%%%%%%%%%%%%%%%%%%%%%%%%%%%%%%%%%%%%%%%%%%%%%%%%%%%%%%%%%%%%%%%%%%%%%%%%

If $m_{n} > 1$ is required in Step (I) of the proposed algorithm in order to ensure \eqref{Reducing the optimization horizon using longer control horizons:prop:trajectory estimate:eq1}, the following methodology which has been proposed in \cite[Section 4]{PW2010} allows us to prove the following: Given a certain condition, the degree of suboptimality $\overline{\alpha}$ in \eqref{Reducing the optimization horizon using longer control horizons:prop:trajectory estimate:eq1} is maintained for the updated control
\begin{align}
	\label{Robustification by reducing the control horizon:eq:control update}
	\hat{u}_{N}(k; x_{n}) := \begin{cases}
				u_{N}(k; x_{n}), & k \leq j - 1 \\
				u_{N}(k - j; x_{u_{N}}(j; x_{n})), & k \geq j
				\end{cases}
\end{align}
with $j \in \{ 1, \ldots, m_{n} - 1 \}$ and where $\sigma(n) + j$ is added to the list $\cS$. The theoretical foundation of such a method is given by the following result:
\begin{proposition}\label{Robustification by reducing the control horizon:prop:m2one estimate}
	Let the open loop system $x_{u_{N}}(\cdot; \cdot)$ of \eqref{Setup:open loop system} with initial value $x_{n} \in \bX$ be given and inequality \eqref{Reducing the optimization horizon using longer control horizons:prop:trajectory estimate:eq1} hold for $u_{N}(\cdot;x_{n})$, $\overline{\alpha} \in (0, 1]$, and $m_{n} \in \{2, \ldots, N - 1\}$. If the inequality
	\begin{align}
		\label{Robustification by reducing the control horizon:prop:m2one estimate:eq1}
		& V_{N}(x_{u_{N}}(m_{n} - j; x_{u_{N}}(j; x_{n}))) - V_{N - j}(x_{u_{N}}(j; x_{n})) \\
		& \leq  ( 1 - \overline{\alpha}) \sum\limits_{k = 0}^{j - 1} \l(x_{u_{N}}(k; x_{n}), u_{N}(k; x_{n})) - \overline{\alpha} \sum\limits_{k = j}^{m_{n} - 1} \l(x_{u_{N}}(k - j; x_{u_{N}}(j; x_{n})), u_{N}(k - j; x_{u_{N}}(j; x_{n}))) \nonumber
	\end{align}
	holds for some $j \in \{ 1, \ldots, m_{n} - 1 \}$, then the control sequence $u_{N}(\cdot; x_{n})$ can be replaced by \eqref{Robustification by reducing the control horizon:eq:control update} and the lower bound on the degree of suboptimality $\overline{\alpha}$ is locally maintained.
\end{proposition}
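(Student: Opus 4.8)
The plan is to check that the replacement \eqref{Robustification by reducing the control horizon:eq:control update} leaves the relaxed Lyapunov inequality \eqref{Reducing the optimization horizon using longer control horizons:prop:trajectory estimate:eq1} intact on the window of length $m_{n}$ issuing from $x_{n}$ --- now read off the spliced control $\hat{u}_{N}(\cdot;x_{n})$ and the state it reaches after $m_{n}$ steps rather than off $u_{N}(\cdot;x_{n})$ and $x_{n+1}$ --- and to observe that condition \eqref{Robustification by reducing the control horizon:prop:m2one estimate:eq1} is exactly the algebraic content of this inequality. Adding $\sigma(n)+j$ to $\cS$ then merely records that the loop is physically closed after $j$ instead of $m_{n}$ steps, the control applied from that instant on stemming from the re-optimization at $x_{u_{N}}(j;x_{n})$.

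\textbf{Step 1 (unravelling the spliced trajectory).} Since \eqref{Robustification by reducing the control horizon:eq:control update} agrees with $u_{N}(\cdot;x_{n})$ on $\{0,\dots,j-1\}$, the generated states satisfy $x_{\hat{u}_{N}}(k;x_{n})=x_{u_{N}}(k;x_{n})$ for $k\le j$; set $y:=x_{u_{N}}(j;x_{n})$. For $k\ge j$ the spliced control runs the open loop re-optimized at $y$, so $x_{\hat{u}_{N}}(k;x_{n})=x_{u_{N}}(k-j;y)$ and in particular $x_{\hat{u}_{N}}(m_{n};x_{n})=x_{u_{N}}(m_{n}-j;y)=:z$. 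This is pure bookkeeping, but it also identifies the running cost that $\hat{u}_{N}$ accumulates over the $m_{n}$ steps as $\sum_{k=0}^{j-1}\l(x_{u_{N}}(k;x_{n}),u_{N}(k;x_{n}))+\sum_{k=j}^{m_{n}-1}\l(x_{u_{N}}(k-j;y),u_{N}(k-j;y))$, i.e.\ exactly the two sums on the right-hand side of \eqref{Robustification by reducing the control horizon:prop:m2one estimate:eq1}.

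\textbf{Step 2 (Bellman's principle and rearrangement).} The decisive step is to rewrite the term $V_{N-j}(y)=V_{N-j}(x_{u_{N}}(j;x_{n}))$ on the left-hand side of \eqref{Robustification by reducing the control horizon:prop:m2one estimate:eq1} by dynamic programming: because $1\le j\le m_{n}-1\le N-2$, the first $j$ elements of the length-$N$ optimal control at $x_{n}$ are optimal for the $j$-stage problem carrying terminal weight $V_{N-j}$, so that $V_{N}(x_{n})=\sum_{k=0}^{j-1}\l(x_{u_{N}}(k;x_{n}),u_{N}(k;x_{n}))+V_{N-j}(y)$. Plugging this identity into \eqref{Robustification by reducing the control horizon:prop:m2one estimate:eq1}, inserting $x_{\hat{u}_{N}}(m_{n};x_{n})=z$ from Step 1, and regrouping the two stage-cost sums, one obtains after a one-line rearrangement $V_{N}(x_{n})\ge V_{N}(x_{\hat{u}_{N}}(m_{n};x_{n}))+\overline{\alpha}\sum_{k=0}^{m_{n}-1}\l(x_{\hat{u}_{N}}(k;x_{n}),\hat{u}_{N}(k;x_{n}))$, which is precisely \eqref{Reducing the optimization horizon using longer control horizons:prop:trajectory estimate:eq1} at $x_{n}$, with $m_{n}$ steps, evaluated along the updated control. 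Hence the lower bound $\overline{\alpha}$ is locally maintained, and Proposition \ref{Reducing the optimization horizon using longer control horizons:prop:trajectory estimate} stays applicable when this transformation is performed at every step for which Step (I) had forced $m_{n}>1$.

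I expect the obstacle to be conceptual rather than analytic: one must be scrupulous about which control and which horizon each value-function argument belongs to --- the original open loop $u_{N}(\cdot;x_{n})$, whose optimal cost-to-go at $y$ over the remaining $N-j$ stages is $V_{N-j}(y)$; the open loop $u_{N}(\cdot;y)$ freshly re-optimized at $y$, whose state after $m_{n}-j$ steps is $z$ and whose $N$-horizon value there is $V_{N}(z)$; and the splice $\hat{u}_{N}$ of the two. The quantity that a naive telescoping of \eqref{Reducing the optimization horizon using longer control horizons:prop:trajectory estimate:eq1} fails to supply is exactly the gap $V_{N}(z)-V_{N-j}(y)$ between the re-optimized and the truncated cost-to-go at $y$, and condition \eqref{Robustification by reducing the control horizon:prop:m2one estimate:eq1} is engineered so that this gap is dominated by $(1-\overline{\alpha})$ times the stage cost already accrued on $\{0,\dots,j-1\}$ minus $\overline{\alpha}$ times the stage cost still to be incurred on $\{j,\dots,m_{n}-1\}$ along the re-optimized branch. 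Once this is recognised the argument is a single rearrangement and no $\cK_\infty$- or $\cKL$-estimates enter; the hypothesis that \eqref{Reducing the optimization horizon using longer control horizons:prop:trajectory estimate:eq1} already holds for $u_{N}(\cdot;x_{n})$ with horizon $m_{n}$ is what guarantees such a window exists, but it is not otherwise used in the rearrangement.
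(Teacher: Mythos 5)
Your proof is correct and follows essentially the same route as the paper: condition \eqref{Robustification by reducing the control horizon:prop:m2one estimate:eq1} is rearranged, using the dynamic programming identity $V_{N}(x_{n})=\sum_{k=0}^{j-1}\ell(x_{u_{N}}(k;x_{n}),u_{N}(k;x_{n}))+V_{N-j}(x_{u_{N}}(j;x_{n}))$, into the relaxed Lyapunov inequality \eqref{Reducing the optimization horizon using longer control horizons:prop:trajectory estimate:eq1} for the spliced control $\hat{u}_{N}(\cdot;x_{n})$. You merely spell out the Bellman step and the trajectory/cost bookkeeping that the paper's one-line proof leaves implicit (and your remark that the original inequality \eqref{Reducing the optimization horizon using longer control horizons:prop:trajectory estimate:eq1} is not used in the algebra itself is accurate).
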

\begin{proof}
	In order to show the assertion, we need to show \eqref{Reducing the optimization horizon using longer control horizons:prop:trajectory estimate:eq1} for the modified control sequence \eqref{Robustification by reducing the control horizon:eq:control update}. Reformulating \eqref{Robustification by reducing the control horizon:prop:m2one estimate:eq1} by shifting the running costs associated with the unchanged control to the left hand side of \eqref{Robustification by reducing the control horizon:prop:m2one estimate:eq1} we obtain
	\begin{align*}
		& V_{N}(x_{u_{N}}(m_{n} - j; x_{u_{N}}(j; x_{n}))) - V_{N}(x_{u_{N}}(0; x_{n}))  \leq - \overline{\alpha} \sum\limits_{k = 0}^{m_{n} - 1} \l(x_{\hat{u}_{N}}(k; x_{n}), \hat{u}_{N}(k; x_{n}))
	\end{align*}
	which is equivalent to
% 	\begin{align*}
% 		V_{N}(x_{n}) \geq V_{N}(x_{\hat{u}_{N}}(m_{n}; x_{n})) + \overline{\alpha} \sum\limits_{k = 0}^{m_{n} - 1} \l(x_{\hat{u}_{N}}(k; x_{n}), \hat{u}_{N}(k; x_{n})),
% 	\end{align*}
% 	i.e., 
	the relaxed Lyapunov inequality \eqref{Reducing the optimization horizon using longer control horizons:prop:trajectory estimate:eq1} for the updated control $\hat{u}_{N}(\cdot; x_{n})$.
\end{proof}
Utilizing Proposition \ref{Robustification by reducing the control horizon:prop:m2one estimate} in Algorithm \ref{Reducing the optimization horizon using longer control horizons:alg:basic algorithm} we see that only Steps (II) and (III) need to be changed and may take the following form:
\begin{fshaded}
	\begin{algorithm}[Modification of Algorithm \ref{Reducing the optimization horizon using longer control horizons:alg:basic algorithm}]\label{Robustification by reducing the control horizon:alg:first improvement}$\\ $ \vspace*{-0.5cm}
	\begin{itemize}
		\item[(II)] Set $\hat{n} := 1$ and $s := \mbox{back}(\cS)$. For $j = 1, \ldots, m_{n}$ do
		\begin{itemize}
			\item[(a)] Implement $\mu_{N}^{\cS}(j-1; x_{n}) := u_{N}(j-1; x_{n})$
			\item[(b)] Compute $u_{N}(\cdot; x_{u_{N}}(j; x_{n}))$, construct $\hat{u}_{N}(\cdot; x_{n})$ according to \eqref{Robustification by reducing the control horizon:eq:control update} and compute $V_{N}(x_{\hat{u}_{N}}(m_{n}; x_{n}))$
			\item[(c)] If condition \eqref{Robustification by reducing the control horizon:prop:m2one estimate:eq1} holds:
			\begin{itemize}
				\item[] Update $u_{N}(\cdot; x_{n}) := \hat{u}_{N}(\cdot; x_{n})$. If $j < m_{n}$ set $\cS := ( \cS, s + j )$, $x_{n + \hat{n}} := x_{\mu_{N}^{\cS}}(j; x_{n})$ and $\hat{n} = \hat{n} + 1$. 
			\end{itemize}
		\end{itemize}
		\item[(III)] Set $\cS := ( \cS, s + m_n )$, $x_{n + \hat{n}} := x_{\mu_{N}^{\cS}}(m_{n}; x_{n})$, $n := n + \hat{n}$ and goto (I)
	\end{itemize}
	\end{algorithm}
	\vspace*{-0.1cm}
\end{fshaded}
Due to the principle of optimality, the value of $V_{N - j}(x_{u_{N}}(j; x_{n}))$ in \eqref{Robustification by reducing the control horizon:prop:m2one estimate:eq1} is known in advance from $V_{N}(x_{n})$. Hence, only $u_{N}(\cdot; x_{u_{N}}(j; x_{n}))$ and $V_{N}(x_{\hat{u}_n}(m_{n}; x_{n}))$ have to be computed. This result has to be checked with $V_{N}(x_{n})$ for all $j \in \{1, \ldots, m_{n} - 2\}$. Hence, the updating instant $\sigma(n)$ has to be kept in mind. %\sout{Note that a complete, yet more conservative update without the requirement of storing $\sigma(n)$ can be performed using the method proposed in} \cite[Proposition 4.3]{PW2010}. 
We also like to stress the fact that condition \eqref{Robustification by reducing the control horizon:prop:m2one estimate:eq1} allows for a less fast decrease of energy along the closed loop, i.e., the case $V_{N}(x_{\hat{u}_{N}}(m_{n}; x_{n})) \geq V_{N}(x_{u_{N}}(m_{n}; x_{n}))$ is not excluded in general which is illustrated by the following example.
\begin{example}
	Consider Example \ref{Reducing the optimization horizon using longer control horizons:ex1} and the initial values $\tilde{x}_0 = (0,1)^T$ with prediction horizon $N=3$. If we apply Algorithm \ref{Robustification by reducing the control horizon:alg:first improvement} we obtain $V_N(\tilde{x}_0) = 5.109994744$ and $V_N(x_{\mu_N}(2;\tilde{x})) = 2.83461176$ which yields $\alpha = 0.5136$. If Algorithm \ref{Reducing the optimization horizon using longer control horizons:alg:basic algorithm} is employed we obtain $V_N(x_{u_N}(2;\tilde{x})) = 2.827656536$ which implies $\alpha = 0.5144$. Hence, Algorithm \ref{Robustification by reducing the control horizon:alg:first improvement} accepts also deteriorations as long as the desired suboptimality degree is still maintained. \\
	Taking $\overline{x}_0 = (1,0)^T$ as initial value shows that the impact of Algorithm \ref{Robustification by reducing the control horizon:alg:first improvement} may also improve these key figures: In this case Algorithm \ref{Robustification by reducing the control horizon:alg:first improvement} provides $V_N(\overline{x}) = 4.08117251$, $V_N(x_{\mu_N}(2;\overline{x})) = 0.96290399$ with $\alpha = 0.7733$ whereas Algorithm \ref{Reducing the optimization horizon using longer control horizons:alg:basic algorithm} gives us $V_N(x_{u_N}(2;\overline{x})) = 1.22718283$ with $\alpha = 0.7470$. For a further comparison we refer to the numerical experiments in the ensuing section.
\end{example}

Next, a counterpart to Theorem \ref{Reducing the optimization horizon using longer control horizons:thm:stability} based on Algorithm \ref{Robustification by reducing the control horizon:alg:first improvement} instead of Algortihm \ref{Reducing the optimization horizon using longer control horizons:alg:basic algorithm} is established. This results allows us to verify that modifying and, thus, robustifying the algorithm still leads to the desired stability and performance properties. To this end, Proposition \ref{Robustification by reducing the control horizon:prop:m2one estimate} is applied iteratively to show asymptotically stable behavior of the state trajectory generated by Algorithm \ref{Reducing the optimization horizon using longer control horizons:alg:basic algorithm}:

\begin{theorem}\label{Robustification by reducing the control horizon:thm:stability}
	Let a control system \eqref{Setup:nonlinear discrete time system} with initial value $x_{0} \in \bX$ and $\overline{\alpha} \in (0, 1]$ be given. Furthermore, suppose Algorithm \ref{Reducing the optimization horizon using longer control horizons:alg:basic algorithm} with the modification of Algorithm \ref{Robustification by reducing the control horizon:alg:first improvement} is applied. Assume that the condition $\alpha \geq \overline{\alpha}$ in Step (Ic) of Algorithm \ref{Reducing the optimization horizon using longer control horizons:alg:basic algorithm} is satisfied for some $j \in \{1,\ldots,N-1\}$ for each iterate $n \in \N_0$. Then the closed loop trajectory corresponding to the closed loop control $\mu_{N}^{\cS}(\cdot; \cdot)$ resulting from the used algorithm satisfies the performance estimate \eqref{Reducing the optimization horizon using longer control horizons:prop:trajectory estimate:eq2} from Proposition \ref{Reducing the optimization horizon using longer control horizons:prop:trajectory estimate}. If, in addition, the conditions of Theorem \ref{Setup:thm:aposteriori}(ii) hold, then $x_{\mu_{N}^{\cS}}(\cdot)$ behaves like a trajectory of an asymptotically stable system.
\end{theorem}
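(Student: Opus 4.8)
The plan is to reduce the claim to Proposition~\ref{Reducing the optimization horizon using longer control horizons:prop:trajectory estimate} by showing that the relaxed Lyapunov inequality \eqref{Reducing the optimization horizon using longer control horizons:prop:trajectory estimate:eq1} survives the control updates carried out in Step~(IIc). First I would isolate the ``outer'' structure of Algorithm~\ref{Reducing the optimization horizon using longer control horizons:alg:basic algorithm} equipped with the modification of Algorithm~\ref{Robustification by reducing the control horizon:alg:first improvement}: by hypothesis Step~(Ic) succeeds for every iterate, so the exit strategy is never used; each pass through Step~(I) fixes some $m_n \in \{1,\ldots,N-1\}$, the loop in Step~(II) performs at most $m_n-1$ accepted updates, and the outer loop then continues from $x_{n+1} := x_{\mu_N^{\cS}}(m_n;x_n)$. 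Consequently the implemented closed loop is governed by update instants $\tau(0) < \tau(1) < \ldots$ with $\tau(n+1) - \tau(n) = m_n \le N-1$, while the instants inserted into $\cS$ in Step~(IIc) merely refine this list without altering the trajectory or its running cost.

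Second, I would show that \eqref{Reducing the optimization horizon using longer control horizons:prop:trajectory estimate:eq1} holds at every $n$ along the coarse list $(\tau(n))_{n\in\N_0}$ for the open loop control finally implemented on the block $[\tau(n),\tau(n)+m_n]$. At the beginning of a block Step~(Ic) gives \eqref{Reducing the optimization horizon using longer control horizons:prop:trajectory estimate:eq1} for $u_N(\cdot;x_n)$ with $\overline{\alpha}$ and this $m_n$. Whenever Step~(IIc) accepts an update it replaces the current open loop control by a splice of the form \eqref{Robustification by reducing the control horizon:eq:control update}, and this happens precisely when \eqref{Robustification by reducing the control horizon:prop:m2one estimate:eq1} holds; by Proposition~\ref{Robustification by reducing the control horizon:prop:m2one estimate} --- whose proof establishes that \eqref{Robustification by reducing the control horizon:prop:m2one estimate:eq1} is equivalent to \eqref{Reducing the optimization horizon using longer control horizons:prop:trajectory estimate:eq1} for the spliced control --- inequality \eqref{Reducing the optimization horizon using longer control horizons:prop:trajectory estimate:eq1} is maintained with unchanged $\overline{\alpha}$ and $m_n$. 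Applying this iteratively over the accepted updates within Step~(II), always reading the prefix running cost in \eqref{Robustification by reducing the control horizon:prop:m2one estimate:eq1} with respect to the control that is current at that moment --- which is what permits reusing the equivalence --- yields \eqref{Reducing the optimization horizon using longer control horizons:prop:trajectory estimate:eq1} for the implemented control; moreover each such control is still a (possibly different) minimizer in \eqref{Reducing the optimization horizon using longer control horizons:eq:closed loop control}, so Proposition~\ref{Reducing the optimization horizon using longer control horizons:prop:trajectory estimate} applies verbatim to $(\tau(n))_n$ and yields \eqref{Reducing the optimization horizon using longer control horizons:prop:trajectory estimate:eq2}. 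Since $V_\infty^{\mu_N^{\cS}}(x_0)$ is nothing but the total sum of stage costs along the closed loop, regrouping it along the refined list $\cS$ leaves its value unchanged, so the estimate also holds for the list actually produced by the algorithm.

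Finally, under the hypotheses of Theorem~\ref{Setup:thm:aposteriori}(ii) I would derive asymptotic stability exactly as in the proof of that theorem and of \cite[Theorem~7.6]{GP2011}: \eqref{Reducing the optimization horizon using longer control horizons:prop:trajectory estimate:eq1} together with $\alpha_1(\|x\|_{\xs}) \le V_N(x) \le \alpha_2(\|x\|_{\xs})$ and $\l(x,u) \ge \alpha_3(\|x\|_{\xs})$ gives $V_N(x_{n+1}) \le V_N(x_n) - \overline{\alpha}\,\alpha_3(\alpha_2^{-1}(V_N(x_n)))$, hence a $\cKL$ bound for $\|x_n\|_{\xs}$ at the instants $\tau(n)$; every state visited within a block satisfies $\alpha_3(\|x\|_{\xs}) \le V_N(x_n) \le \alpha_2(\|x_n\|_{\xs})$ because the running costs over the $m_n < N$ steps emanating from $x_n$ sum to at most $V_N(x_n)$, and since $m_n \le N-1$ is bounded these two facts combine into a single $\beta \in \cKL$ for which \eqref{Setup:def:stability:eq1} holds. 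I expect the main obstacle to be the bookkeeping in the second step: one must check that after the block control has already been modified, the quantities entering \eqref{Robustification by reducing the control horizon:prop:m2one estimate:eq1} at the next accepted index genuinely fit the hypotheses of Proposition~\ref{Robustification by reducing the control horizon:prop:m2one estimate}, and one must run the telescoping argument on the coarse list $(\tau(n))_n$ rather than on the refined $\cS$ --- across whose inserted sub-intervals \eqref{Reducing the optimization horizon using longer control horizons:prop:trajectory estimate:eq1} need not hold individually --- invoking the label-invariance of $V_\infty^{\mu_N^{\cS}}$ only at the very end.
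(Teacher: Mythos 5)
Your proposal is correct and follows essentially the same route as the paper: the paper's proof likewise introduces a coarse auxiliary list $\widetilde{\cS}=(\tau(n))_{n\in\N_0}$ of the Step~(Ic) check instants, notes that \eqref{Reducing the optimization horizon using longer control horizons:prop:trajectory estimate:eq1} holds there with $m_n=\tau(n+1)-\tau(n)$, invokes Proposition~\ref{Robustification by reducing the control horizon:prop:m2one estimate} (applied iteratively) to show this inequality is maintained under the accepted Step~(IIc) updates, and then concludes via Proposition~\ref{Reducing the optimization horizon using longer control horizons:prop:trajectory estimate} and standard direct Lyapunov arguments. Your additional bookkeeping remarks (reading \eqref{Robustification by reducing the control horizon:prop:m2one estimate:eq1} with respect to the currently implemented control and regrouping the closed-loop cost along the refined list) only spell out details the paper leaves implicit.
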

\begin{proof}
	The list $\cS$ constructed by the algorithm contains all time instants at which the sequence of control values is updated by the MPC feedback law $\mu_N^{\cS}(\cdot;\cdot)$. Hence, different to the basic Algorithm \ref{Reducing the optimization horizon using longer control horizons:alg:basic algorithm} the list $\cS$ may not only contain time instants at which the relaxed Lyapunov inequality \eqref{Reducing the optimization horizon using longer control horizons:prop:trajectory estimate:eq1} holds.\\
	Consider a second list $\widetilde{\cS} = (\tau(n))_{n \in \N_0}$ constructed analogously to $\cS$ from Algorithms \ref{Reducing the optimization horizon using longer control horizons:alg:basic algorithm}, \ref{Robustification by reducing the control horizon:alg:first improvement} but which is not updated in the modified Step (II) of Algorithm \ref{Robustification by reducing the control horizon:alg:first improvement}. Since Step (Ic) is satisfied for some $j \in \{1, \ldots, N - 1\}$, Inequality \eqref{Reducing the optimization horizon using longer control horizons:prop:trajectory estimate:eq1} with $x_{n} = x_{\mu_{N}^{\cS}}(\tau(n))$ and $m_n = \tau(n+1) - \tau(n)$ is satisfied. Using Proposition \ref{Robustification by reducing the control horizon:prop:m2one estimate} ensures that this inequality is maintained despite of updates carried out by Algorithm \ref{Robustification by reducing the control horizon:alg:first improvement}. Hence, by Proposition \ref{Reducing the optimization horizon using longer control horizons:prop:trajectory estimate}, the performance estimate \eqref{Reducing the optimization horizon using longer control horizons:prop:trajectory estimate:eq2} follows for the control sequence $\mu_{N}^{\cS}(\cdot; \cdot)$. Again, similar to the proof of Theorem \ref{Setup:thm:aposteriori} given in \cite[Theorem 7.6]{GP2011}, standard direct Lyapunov techniques can be used to show asymptotically stable behavior of the closed loop $x_{\mu_{N}^{\cS}}(\cdot)$.
\end{proof}

An important conclusion of Proposition \ref{Robustification by reducing the control horizon:prop:m2one estimate} and Theorem \ref{Robustification by reducing the control horizon:thm:stability} is the following:

\begin{corollary}\label{Robustification by reducing the control horizon:cor:stability}
	Consider the open loop system $x_{u_{N}}(\cdot; \cdot)$ of \eqref{Setup:open loop system}, the feedback law $\mu_{N}^{\cS}(\cdot; \cdot)$, the closed loop trajectory $x_{n}$, $n \in \N_0$, of \eqref{Reducing the optimization horizon using longer control horizons:eq:closed loop} with initial value $x_0 \in \bX$ and a fixed $\overline{\alpha} \in (0, 1]$ to be given. Moreover, suppose inequality \eqref{Reducing the optimization horizon using longer control horizons:prop:trajectory estimate:eq1} to hold for $u_{N}(\cdot;x_{n})$, $\overline{\alpha} \in (0, 1]$, and $m_{n} \in \{2, \ldots, N - 1\}$. If \eqref{Robustification by reducing the control horizon:prop:m2one estimate:eq1} holds for all $j \in \{ 1, \ldots, m_n - 1 \}$ and all $n \in \N_{0}$, then \eqref{Setup:thm:aposteriori:eq2} holds, that is the standard MPC feedback $\mu_{N}(\cdot)$ can be applied. If, in addition, the conditions of Theorem \ref{Setup:thm:aposteriori}(ii) hold, then $x_{\mu_{N}}(\cdot)$ behaves like a trajectory of an asymptotically stable system.
\end{corollary}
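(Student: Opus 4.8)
The plan is to reduce the statement to Theorem~\ref{Robustification by reducing the control horizon:thm:stability} by showing that, under the stated hypothesis, the modification of Algorithm~\ref{Reducing the optimization horizon using longer control horizons:alg:basic algorithm} given in Algorithm~\ref{Robustification by reducing the control horizon:alg:first improvement} closes the control loop at \emph{every} sampling instant, so that the feedback $\mu_{N}^{\cS}(\cdot;\cdot)$ actually applied coincides with the standard receding horizon feedback $\mu_{N}(\cdot)$ and the $m$-step performance estimate of Proposition~\ref{Reducing the optimization horizon using longer control horizons:prop:trajectory estimate} collapses to the one-step estimate~\eqref{Setup:thm:aposteriori:eq2} of Theorem~\ref{Setup:thm:aposteriori}.

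First I would fix an iterate $n$ and trace Step~(II) of Algorithm~\ref{Robustification by reducing the control horizon:alg:first improvement} on the planned interval $[\sigma(n),\sigma(n)+m_{n}]$. Since, by assumption, inequality~\eqref{Robustification by reducing the control horizon:prop:m2one estimate:eq1} holds for every $j\in\{1,\dots,m_{n}-1\}$, the test in Step~(IIc) succeeds for each such $j$; hence the instants $\sigma(n)+1,\dots,\sigma(n)+m_{n}$ are all appended to $\cS$ and, because appending $\sigma(n)+j$ replaces the tail of the planned sequence by $u_{N}(\cdot;x_{u_{N}}(j;x_{n}))$ via~\eqref{Robustification by reducing the control horizon:eq:control update} while $\mu_{N}(x)=u_{N}(0;x)$, the value actually implemented at each step equals $\mu_{N}$ evaluated at the current closed loop state. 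Consequently every gap of the refined list $\cS$ is one, $x_{\mu_{N}^{\cS}}(\cdot)=x_{\mu_{N}}(\cdot)$ and $V_{\infty}^{\mu_{N}^{\cS}}(\cdot)=V_{\infty}^{\mu_{N}}(\cdot)$.

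Next I would transfer the relaxed Lyapunov inequality to this trajectory by reusing the two-list device from the proof of Theorem~\ref{Robustification by reducing the control horizon:thm:stability}: keep the coarse list $\widetilde{\cS}=(\tau(n))_{n\in\N_{0}}$ of planned update instants, for which the $m_{n}$-step inequality~\eqref{Reducing the optimization horizon using longer control horizons:prop:trajectory estimate:eq1} with $x_{n}=x_{\mu_{N}^{\cS}}(\tau(n))$ holds by hypothesis, and apply Proposition~\ref{Robustification by reducing the control horizon:prop:m2one estimate} iteratively over $j$ to conclude that this inequality is preserved under all refinements carried out in Step~(II). Feeding the resulting family of inequalities for $\widetilde{\cS}$ into Proposition~\ref{Reducing the optimization horizon using longer control horizons:prop:trajectory estimate} yields~\eqref{Reducing the optimization horizon using longer control horizons:prop:trajectory estimate:eq2}, which in the present situation --- where the applied feedback is $\mu_{N}(\cdot)$ --- is precisely~\eqref{Setup:thm:aposteriori:eq2} (at $x_{0}$, and, by running the same argument from each later $x_{n}$ using that the hypotheses are assumed for all $n$, for all $n\in\N_{0}$). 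If, additionally, the bounds of Theorem~\ref{Setup:thm:aposteriori}(ii) are available, the same direct Lyapunov argument as in~\cite[Theorem~7.6]{GP2011} upgrades this to the $\cKL$-estimate~\eqref{Setup:def:stability:eq1}.

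I expect the main obstacle to be the bookkeeping in the iteration of Proposition~\ref{Robustification by reducing the control horizon:prop:m2one estimate}: after each successful update one has to verify that its hypotheses are still met for the next value of $j$ --- the tail of the updated plan on the remaining sub-interval is again a genuine open loop optimum $u_{N}(\cdot;x_{u_{N}}(j;x_{n}))$, and the $m_{n}$-step inequality transported by the proposition must be matched correctly with the shorter interval --- and, crucially, that the running cost contributions in~\eqref{Robustification by reducing the control horizon:prop:m2one estimate:eq1} telescope so that the degree of suboptimality $\overline{\alpha}$ is not eroded across the successive one-step splits. Once this is checked, the conclusion is immediate from Propositions~\ref{Reducing the optimization horizon using longer control horizons:prop:trajectory estimate} and~\ref{Robustification by reducing the control horizon:prop:m2one estimate} together with Theorem~\ref{Robustification by reducing the control horizon:thm:stability}.
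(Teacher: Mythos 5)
Your proposal is correct and follows essentially the same route as the paper: the paper's own proof is simply ``follows directly from Theorem~\ref{Robustification by reducing the control horizon:thm:stability}'', and your argument unpacks exactly that reduction --- since \eqref{Robustification by reducing the control horizon:prop:m2one estimate:eq1} succeeds at every $j$, Algorithm~\ref{Robustification by reducing the control horizon:alg:first improvement} closes the loop at every instant so $\mu_{N}^{\cS}$ coincides with $\mu_{N}$, while the two-list device and the iterative use of Proposition~\ref{Robustification by reducing the control horizon:prop:m2one estimate} transfer \eqref{Reducing the optimization horizon using longer control horizons:prop:trajectory estimate:eq1} to the refined trajectory and Proposition~\ref{Reducing the optimization horizon using longer control horizons:prop:trajectory estimate} plus the standard Lyapunov argument give \eqref{Setup:thm:aposteriori:eq2} and asymptotic stability. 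The bookkeeping issue you flag (which control sequence \eqref{Robustification by reducing the control horizon:prop:m2one estimate:eq1} refers to after successive updates) is real but is glossed over by the paper as well, so your treatment is, if anything, more explicit than the original.
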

\begin{proof}
	Follows directly from Theorem \ref{Robustification by reducing the control horizon:thm:stability}.
\end{proof}
The following example illustrates the impact of the modified algorithm, but it also indicates that the lower bound may still not be tight.
\begin{example}\label{Robustification by reducing the control horizon:ex}
	In Example \ref{Reducing the optimization horizon using longer control horizons:ex2} stability can be shown if one allows for implementing more than than one element of the open loop sequence of control values, i.e., $m_n \geq 1$. Now we use Algorithm \ref{Robustification by reducing the control horizon:alg:first improvement} instead of Algorithm \ref{Reducing the optimization horizon using longer control horizons:alg:basic algorithm}, i.e. we allow for longer control horizons $m_n$ and try to verify whether the control loop can be closed more often. And indeed, we obtain $\alpha > \overline{\alpha} = 0$ for each initial value $x_0 \in \mathcal{X}$ with $m_n = 1$, that is we can show stability for standard MPC according to Corollary \ref{Robustification by reducing the control horizon:cor:stability}. Moreover, taking Example \ref{Reducing the optimization horizon using longer control horizons:ex2} into account, this assertion holds for all $x \in X = \R^2$.

	Yet, if the suboptimality bound is slightly increased to $\overline{\alpha} = 0.01$, the condition $\alpha \geq \overline{\alpha}$ in Step (Ic) is not satisfied for any $j \in \{ 1, \ldots, N - 1 \}$ for trajectories emanating from a subset of $\mathcal{X}$, cf. Figure \ref{Robustification by reducing the control horizon:fig:critical points}.
\end{example}
\begin{figure}[!ht]
	\begin{center}
		\includegraphics[width=5cm]{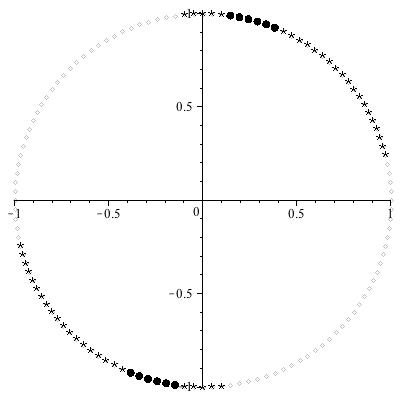}
		\caption{Illustration of initial values (grey) where $\alpha < 0$ in \eqref{Setup:thm:aposteriori:eq1} ($\star$) whereas $\alpha < \overline{\alpha} = 0.01$ in Step (Ic) of Algorithm \ref{Reducing the optimization horizon using longer control horizons:alg:basic algorithm} together with Algorithm \ref{Robustification by reducing the control horizon:alg:first improvement} for at least one iterate $n$ ($\bullet$)}
		\label{Robustification by reducing the control horizon:fig:critical points}
	\end{center}
\end{figure}
Intuitively, one could guess that the performance of the Riccati based feedback law in our example is better than $\overline{\alpha} = 0.01$. In the following section, we address this issue indirectly as an outcome of an exit strategy required in Step (Id) of Algorithm \ref{Reducing the optimization horizon using longer control horizons:alg:basic algorithm}, see also Remark \ref{Reducing the optimization horizon using longer control horizons:rem:exit strategy}.

%%%%%%%%%%%%%%%%%%%%%%%%%%%%%%%%%%%%%%%%%%%%%%%%%%%%%%%%%%%%%%%%%%%%%%%%%%%%%%%%
\section{Acceptable Violations of Relaxed Lyapunov Inequality}
% \section{Acceptable violations of relaxed Lyapunov inequality}
\label{Section:Acceptable violations of relaxed Lyapunov inequality}
%%%%%%%%%%%%%%%%%%%%%%%%%%%%%%%%%%%%%%%%%%%%%%%%%%%%%%%%%%%%%%%%%%%%%%%%%%%%%%%%

Until now we have supposed that our relaxed Lyapunov Inequality \eqref{Reducing the optimization horizon using longer control horizons:prop:trajectory estimate:eq1}, i.e.,
\begin{align}\label{Acceptable violations of relaxed Lyapunov inequality:eq:rho}
	\rho_i = \rho_i(\overline{\alpha}) := V_{N}(x_{i}) - V_{N}(x_{u_{N}}(m_{i}; x_{i})) - \overline{\alpha} \sum\limits_{k = 0}^{m_{i} - 1} \l(x_{u_{N}}(k; x_{i}), u_{N}(k; x_{i})) \geq 0,
\end{align}
is satisfied for some $m_{i} \in \{1, 2, \ldots, N - 1 \}$. It is well known that \eqref{Reducing the optimization horizon using longer control horizons:prop:trajectory estimate:eq1} and, thus, $\rho_i \geq 0$ holds for sufficiently long horizon $N$, see \cite{JH2005, GMTT2005, AB1995, GPSW2010}. However, this is not necessarily true for short horizons $N$ -- even if the closed loop shows asymptotically stable behavior. Our basic Algorithm \ref{Reducing the optimization horizon using longer control horizons:alg:basic algorithm} allows us to cope with such a case via an exit strategy in Step (Id). As outlined in Remark \ref{Reducing the optimization horizon using longer control horizons:rem:exit strategy}, the length of the prediction horizon could be sufficiently increased in order to deal with this issue --- a remedy which should be avoided due to its high computational costs. Here, we pursue a different approach based on the following result which uses an idea similar to the watchdog technique in nonlinear optimization, see \cite{CPLP1982}.

\begin{theorem}\label{Acceptable violations of relaxed Lyapunov inequality:thm:estimate}
	Let a list $\cS = (\sigma(0),\sigma(1),\ldots) \subseteq \N_0$ be given such that the sequence $(m_i)_{i \in \N_0}$ defined by $m_i := \cS(i+1) - \cS(i)$ is contained in $\{1,2,\ldots,N-1\}$. In addition, for optimization horizon $N$ and initial state $x_0 \in \bX$, let the closed loop trajectory $(x_i)_{i \in \N_0}$ be generated by the feedback law $\mu_{N}^{\cS}(\cdot; \cdot)$ according to \eqref{Reducing the optimization horizon using longer control horizons:eq:closed loop}. Consider the open loop system $x_{u_N}(\cdot;\cdot)$ of \eqref{Setup:open loop system}, $\overline{\alpha} \in (0, 1]$, and the sequence $(\rho_i)_{i \in \N_0}$ from \eqref{Acceptable violations of relaxed Lyapunov inequality:eq:rho} to be given. Furthermore, suppose there exist $\alpha_1(\cdot)$, $\alpha_2(\cdot) \in \mathcal{K}_\infty$ such that $\alpha_2(\| x_i \|_{\xs}) \geq V_N(x_i)$ and $\l^\star(x_i) \geq \alpha_1(\| x_i \|_{\xs})$ hold for all $x_i$, $i \in \N_0$. Then, convergence of $(s_{n})_{n \in \N_0}$ with $s_n := \sum_{i = 0}^n \rho_i$ ensures the convergence $x_i \rightarrow \xs$ for $i$ tending to infinity, i.e., $x_{\mu_{N}^{\cS}}(\cdot)$ behaves like a trajectory of an asymptotically stable system. Furthermore, we have $V_N(x_i) \rightarrow 0$ for $i$ approaching infinity.
	%\begin{align}
		%s_n := \sum_{i = 0}^n \rho_i \quad \mbox{and} \quad \rho_i := V_{N}(x_{i}) - V_{N}(x_{u_{N}}(m_{i}; x_{i})) - \overline{\alpha} \sum\limits_{k = 0}^{m_{i} - 1} \l(x_{u_{N}}(k; x_{i}), u_{N}(k; x_{i})).
	%\end{align}
\end{theorem}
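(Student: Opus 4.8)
The plan is to argue by a telescoping (summation) estimate built on the very identity defining $\rho_i$, and then to convert the resulting summability of the running costs into convergence of the states via the two $\cK_\infty$-bounds. First I would record the bookkeeping: by construction of the closed loop \eqref{Reducing the optimization horizon using longer control horizons:eq:closed loop} we have $x_{i+1} = x_{u_N}(m_i; x_i)$, so \eqref{Acceptable violations of relaxed Lyapunov inequality:eq:rho} reads $\rho_i = V_N(x_i) - V_N(x_{i+1}) - \overline{\alpha}\,\l^\star(x_i)$ with $\l^\star(x_i) := \sum_{k=0}^{m_i-1}\l(x_{u_N}(k;x_i),u_N(k;x_i)) \geq 0$. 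Rearranging gives $V_N(x_i) - V_N(x_{i+1}) = \rho_i + \overline{\alpha}\,\l^\star(x_i)$, and summing over $i = 0,\dots,n$ telescopes to
\begin{align*}
  s_n + \overline{\alpha}\sum_{i=0}^{n}\l^\star(x_i) \;=\; V_N(x_0) - V_N(x_{n+1}).
\end{align*}
Since $V_N \geq 0$, this yields the bound $\overline{\alpha}\sum_{i=0}^{n}\l^\star(x_i) \leq V_N(x_0) - s_n$ for every $n \in \N_0$.

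The key step — and the one place where the possible negativity of the $\rho_i$ (the ``acceptable violations'') has to be dealt with — is to extract summability of $\l^\star(x_i)$ from this inequality. The partial sums $a_n := \overline{\alpha}\sum_{i=0}^n \l^\star(x_i)$ are nondecreasing because each term is nonnegative, while the right-hand side $V_N(x_0) - s_n$ is bounded in $n$ precisely because $(s_n)_{n\in\N_0}$ is assumed to converge (a convergent real sequence is bounded). A nondecreasing sequence bounded above converges, hence $\sum_{i=0}^{\infty}\l^\star(x_i) < \infty$ and in particular $\l^\star(x_i) \to 0$ as $i \to \infty$. This is exactly where the hypothesis is convergence of $(s_n)$ rather than, say, $\rho_i \geq 0$ for all $i$; everything else is routine. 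I expect this to be the main (though modest) obstacle: one must resist the temptation to treat $V_N$ as a genuine Lyapunov function and instead argue purely through the monotone $\l^\star$-series.

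It then remains to pass to the states. From $\l^\star(x_i) \geq \alpha_1(\|x_i\|_{\xs})$ with $\alpha_1 \in \cK_\infty$ (so $\alpha_1^{-1}$ is continuous with $\alpha_1^{-1}(0)=0$), the limit $\l^\star(x_i)\to 0$ forces $\|x_i\|_{\xs} \leq \alpha_1^{-1}(\l^\star(x_i)) \to 0$, i.e.\ $x_i \to \xs$. Finally, $0 \leq V_N(x_i) \leq \alpha_2(\|x_i\|_{\xs})$ together with continuity of $\alpha_2$ and $\alpha_2(0)=0$ gives $V_N(x_i) \to 0$, the last assertion; as a by-product the telescoped identity yields the performance-type relation $\overline{\alpha}\sum_{i=0}^{\infty}\l^\star(x_i) = V_N(x_0) - \lim_{n\to\infty} s_n$, which is worth stating. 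I would close with the caveat that, unlike Theorem \ref{Setup:thm:aposteriori}(ii), a trajectory-independent $\cKL$-estimate is not immediate here — the overshoot of $V_N$ along the closed loop is governed by the sizes of the $\rho_i$ — so ``behaves like a trajectory of an asymptotically stable system'' is meant here in the sense of attractivity of the generated trajectory (convergence to $\xs$, along the update times $\sigma(i)$, and of every intermediate state as well as soon as $\l$ additionally admits an $\alpha_3$-type lower bound as in Theorem \ref{Setup:thm:aposteriori}(ii)) together with the sandwich bounds $\alpha_1,\alpha_2$.
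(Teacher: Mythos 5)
Your proposal is correct, but it takes a genuinely different and considerably more direct route than the paper. The paper proceeds in stages by contradiction: it first shows boundedness of $(V_N(x_i))_{i\in\N_0}$ using Cauchy-type estimates on the partial sums (its manipulations of $|\rho_n-\rho_m|$ are really telescoped differences $s_{k-1}-s_{M-1}$), then boundedness of $(x_i)_{i\in\N_0}$ via $\l^\star(x_i)\geq\alpha_1(\|x_i\|_{\xs})$, then extracts a convergent subsequence, excludes a limit $\bar{x}\neq\xs$ by a further contradiction, and finally upgrades subsequence convergence to convergence of the whole sequence by another Cauchy argument. You instead telescope the defining identity $\rho_i=V_N(x_i)-V_N(x_{i+1})-\overline{\alpha}\sum_{k=0}^{m_i-1}\l(x_{u_N}(k;x_i),u_N(k;x_i))$ once, note that the nondecreasing partial sums of the nonnegative accumulated stage costs are bounded by $V_N(x_0)-s_n$, and read off summability; the $\cK_\infty$ bounds then give $x_i\rightarrow\xs$ and $V_N(x_i)\rightarrow 0$ immediately. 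This buys a much shorter argument with no subsequence machinery, yields the limit relation $\overline{\alpha}\sum_{i=0}^{\infty}\sum_{k=0}^{m_i-1}\l(x_{u_N}(k;x_i),u_N(k;x_i))=V_N(x_0)-\lim_{n\to\infty}s_n$ as a by-product (which dovetails with Corollary \ref{Acceptable violations of relaxed Lyapunov inequality:cor:suboptimality degree}), and in fact only uses that $(s_n)_{n\in\N_0}$ is bounded below rather than convergent, so your argument proves a slightly stronger statement; what the paper's staged proof buys in exchange is that intermediate facts (boundedness of the value sequence and of the trajectory) are made explicit. Two small points of hygiene: your symbol $\l^\star(x_i)$ denotes the accumulated running cost over the control horizon, whereas the hypothesis and the paper's proof use $\l^\star(x_i)=\min_{u\in\U}\l(x_i,u)$; the step you actually need is the chain $\sum_{k=0}^{m_i-1}\l(x_{u_N}(k;x_i),u_N(k;x_i))\geq\l(x_i,u_N(0;x_i))\geq\min_{u\in\U}\l(x_i,u)\geq\alpha_1(\|x_i\|_{\xs})$, which should be stated to avoid conflating the two quantities. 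Likewise, the identification $x_{i+1}=x_{u_N}(m_i;x_i)$ rests on the equivalence of the open-loop control and $\mu_N^{\cS}$ via Bellman's principle; the paper invokes exactly this in its corollary proofs, so you are consistent with its conventions, but a sentence acknowledging it would make the telescoping airtight.
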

\begin{proof}
	Plugging the definition of $\rho_i$ into $s_n$ yields
	\begin{align*}
		s_n = \sum_{i=0}^n \rho_i & = \sum_{i=0}^n \left( V_N(x_i) - V_N(\underbrace{x_{u_N}(m_i;x_i)}_{=x_{i+1}}) - \overline{\alpha} \sum_{k=0}^{m_i-1} \ell(x_{u_N}(k;x_i),u_N(k;x_i)) \right) \\
		& = V_N(x_0) - V_N(x_{n+1}) - \sum_{i=0}^n \sum_{k=0}^{m_i-1} \ell(x_{u_N}(k;x_i),u_N(k;x_i)). 
	\end{align*}
	Since $V_N(x_{n+1}) \geq 0$ and $\ell(\cdot,\cdot)$ is positive the convergence of the sequence $(s_n)_{n \in \N_0}$ yields boundedness of each subtrahend. Using this assertion for the sum of the stage costs in combination with positivity of $\ell(\cdot,\cdot)$ and, thus, monotonicity of this summand with respect to the index $i$, implies that each summand of the stage costs $\sum_{k=0}^{m_i-1} \ell(x_{u_N}(k;x_i),u_N(k;x_i))$ tend to zero for $i$ approaching infinity. Then, the assertion with respect to the closed loop trajectory can be concluded by
	\begin{equation*}
		\| x_{\mu_{N}^{\cS}}(n;x_0) \|_{\xs} = \| x_{u_N}(k;x_i) \|_{\xs} \leq \alpha_1^{-1}(\ell^\star(x_{u_N}(k;x_i))) \leq \alpha_1^{-1}(\ell(x_{u_N}(k;x_i),\mu_{N}^{\cS}(k;x_{u_N}(k;x_i))))
	\end{equation*}
	for $n = \sigma(i) + k$. The latter also implies the assertion $V_N(x_i) \rightarrow 0$ in view of $\alpha_2(\|x_i\|_{\xs}) \geq V_N(x_i)$.
\end{proof}

Theorem \ref{Acceptable violations of relaxed Lyapunov inequality:thm:estimate} ensures asymptotic stability but does not guarantee the desired performance specification. We like to point out that $\rho_i$, $i \in \N_0$, does not have to be positive. Indeed, even $V_N(\cdot)$ may increase along the closed loop trajectory before it finally converges to zero. 

Analyzing the sequence $(s_n)_{n \in \N_0}$ and its limit more carefully leads to the following corollary which allows us to generalize Algorithm \ref{Reducing the optimization horizon using longer control horizons:alg:basic algorithm} by incorporating knowledge of the sequence $(s_n)_{n \in \N_0}$.

\begin{corollary}\label{Acceptable violations of relaxed Lyapunov inequality:cor:exit strategy}
	Let $\overline{\alpha} \in (0,1]$ be given and $s_n \geq 0$ hold for some $n \in \N_0$ with $s_n$ from Theorem \ref{Acceptable violations of relaxed Lyapunov inequality:thm:estimate}. Then, the closed loop trajectory $x_{\mu_N^\cS}(\cdot)$ satisfies the following relaxed Lyapunov inequality at $x_{n+1}$
	\begin{align}\label{Acceptable violations of relaxed Lyapunov inequality:cor:exit strategy:eq1}
		V_{N}(x_{n + 1}) + \overline{\alpha} \sum\limits_{i = 0}^{n} \sum\limits_{k = 0}^{m_i - 1} \l(x_{u_{N}}(k; x_i), u_{N}(k; x_i)) \leq V_{N}(x_{0}).
	\end{align}
	If $\lim_{n \to \infty} s_n \geq 0$ holds, then the suboptimality Estimate \eqref{Reducing the optimization horizon using longer control horizons:prop:trajectory estimate:eq2} is satisfied. 
\end{corollary}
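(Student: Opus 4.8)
The plan is to turn the defining relation \eqref{Acceptable violations of relaxed Lyapunov inequality:eq:rho} into a telescoping identity for the partial sums $s_n=\sum_{i=0}^n\rho_i$. First I would use that, by the construction of the closed loop \eqref{Reducing the optimization horizon using longer control horizons:eq:closed loop}, the open-loop state reached after $m_i$ steps from $x_i$ is exactly $x_{i+1}$, i.e. $x_{u_{N}}(m_i;x_i)=x_{i+1}$. Substituting this into \eqref{Acceptable violations of relaxed Lyapunov inequality:eq:rho} gives $\rho_i=V_N(x_i)-V_N(x_{i+1})-\overline{\alpha}\sum_{k=0}^{m_i-1}\l(x_{u_{N}}(k;x_i),u_{N}(k;x_i))$, and summing over $i=0,\dots,n$ the $V_N$–terms telescope, so that
\[
  s_n=V_N(x_0)-V_N(x_{n+1})-\overline{\alpha}\sum_{i=0}^n\sum_{k=0}^{m_i-1}\l(x_{u_{N}}(k;x_i),u_{N}(k;x_i)).
\]
For the first claim this is all that is needed: $s_n\ge 0$ is literally a rearrangement of \eqref{Acceptable violations of relaxed Lyapunov inequality:cor:exit strategy:eq1}.

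For the second claim I would pass to the limit. Writing $t_n:=\sum_{i=0}^n\sum_{k=0}^{m_i-1}\l(x_{u_{N}}(k;x_i),u_{N}(k;x_i))$, the sequence $(t_n)$ is nondecreasing because $\l\ge 0$, hence converges in $[0,\infty]$; and since $V_N(x_{n+1})\ge 0$ the identity above gives $s_n\le V_N(x_0)-\overline{\alpha}\,t_n$. Thus if $t_n\to\infty$ then $s_n\to-\infty$, contradicting $\lim_{n\to\infty}s_n\ge 0$, so $t_n$ converges to a finite limit $T$. Letting $n\to\infty$ in $s_n\le V_N(x_0)-\overline{\alpha}\,t_n$ and using $\lim s_n\ge 0$ yields $\overline{\alpha}\,T\le V_N(x_0)$. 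It remains to identify $T$ with $V_\infty^{\mu_{N}^{\cS}}(x_0)$: by Bellman's principle the minimizer in \eqref{Reducing the optimization horizon using longer control horizons:eq:closed loop control} agrees on its first $m_i$ components with $u_{N}(\cdot;x_i)$, so the running cost of each implemented open-loop piece equals the corresponding closed-loop running cost, and summing over $i$ gives $T=V_\infty^{\mu_{N}^{\cS}}(x_0)$ in the sense of \eqref{Reducing the optimization horizon using longer control horizons:eq:value function mu_{N}}. Hence $\overline{\alpha}V_\infty^{\mu_{N}^{\cS}}(x_0)\le V_N(x_0)$, and the two outer inequalities $\overline{\alpha}V_\infty(x_0)\le\overline{\alpha}V_\infty^{\mu_{N}^{\cS}}(x_0)$ and $V_N(x_0)\le V_\infty(x_0)$ follow from the principle of optimality exactly as in the proof of Proposition \ref{Reducing the optimization horizon using longer control horizons:prop:trajectory estimate}; this is precisely \eqref{Reducing the optimization horizon using longer control horizons:prop:trajectory estimate:eq2}.

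There is essentially no hard step here: the argument is a telescoping sum together with the monotonicity of $(t_n)$. The only two places that deserve a line of care are the finiteness of $T$, which I extract from $V_N\ge 0$ and $\lim s_n\ge 0$, and the identification $T=V_\infty^{\mu_{N}^{\cS}}(x_0)$, which relies on the consistency between the $m_i$-step feedback \eqref{Reducing the optimization horizon using longer control horizons:eq:closed loop control} and the truncation of the open-loop control $u_{N}(\cdot;x_i)$; both are immediate once stated.
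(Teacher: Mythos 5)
Your proposal is correct and follows essentially the same route as the paper: the same telescoping identity $s_n = V_N(x_0)-V_N(x_{n+1})-\overline{\alpha}\sum_{i=0}^{n}\sum_{k=0}^{m_i-1}\l(x_{u_N}(k;x_i),u_N(k;x_i))$ gives the first claim, and your monotone partial-sum argument for $t_n$ is just a reparametrization of the paper's auxiliary sequence $\tilde{s}_n = V_N(x_0)-\overline{\alpha}t_n$, which the paper shows is decreasing, nonnegative and convergent to conclude $\overline{\alpha}V_\infty^{\mu_N^{\cS}}(x_0)\leq V_N(x_0)$. The identification of the accumulated open-loop stage costs with $V_\infty^{\mu_N^{\cS}}(x_0)$ via the consistency of $u_N(\cdot;x_i)$ and $\mu_N^{\cS}(\cdot;x_i)$ is likewise exactly the equivalence the paper invokes.
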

\begin{proof}
	We obtain the stated relaxed Lyapunov inequality directly by inserting the definition of $\rho_i$ from \eqref{Acceptable violations of relaxed Lyapunov inequality:eq:rho} into $s_n$ and using the equivalence of the open and closed loop control $u_{N}(\cdot; x_{i})$, $\mu_{N}^{\cS}(\cdot; x_{i})$ from \eqref{Reducing the optimization horizon using longer control horizons:eq:closed loop control} which allows us to replace $x_{u_{N}}(m_{i}; x_{i})$ by $x_{i + 1}$. In order to show \eqref{Reducing the optimization horizon using longer control horizons:prop:trajectory estimate:eq2} we have to establish $\overline{\alpha} V_\infty^{\mu_{N}^{\cS}}(x_0) \leq V_N(x_0)$. To this end, we define
	\begin{align*}
		\tilde{s}_n := V_N(x_0) - \overline{\alpha} \sum_{i=0}^n \sum_{k=0}^{m_i-1} \l(x_{u_N}(k;x_i),u_N(k;x_i)) = s_n + V_N(x_{u_N}(m_i;x_i)).
	\end{align*}
	The fact that the range of $V_N(\cdot)$ and $\l(\cdot,u)$ is contained in $\R_0^+$ for all $u \in \U$ ensures that $(\tilde{s}_n)_{n \in \N_0}$ is a monotonically decreasing sequence which satisfies $\tilde{s}_n \geq s_n$ for all $n \in \N_0$. Since $\tilde{s}_n < 0$ for $n \in \N_0$ contradicts the positivity of $\lim_{n \to \infty} s_n$, we conclude $\tilde{s}_n \geq 0$ for all $n \in \N_0$. Hence, $(\tilde{s}_n)_{n \in \N_0}$ is monotonically decreasing and bounded and, thus, converges to $\tilde{s} \geq \lim_{n \to \infty} s_n \geq 0$. This yields
	\begin{align*}
		V_N(x_0) \geq \lim_{n \to \infty} \overline{\alpha} \sum_{i=0}^n \sum_{k=0}^{m_i-1} \l(x_{u_N}(k;x_i),u_N(k;x_i)) = \overline{\alpha} V_\infty^{\mu_{N}^{\cS}}(x_0),
	\end{align*}
	i.e., the desired assertion.
\end{proof}

The key idea of Corollary \ref{Acceptable violations of relaxed Lyapunov inequality:cor:exit strategy} in comparison to Theorem \ref{Setup:thm:aposteriori}(i) and Proposition \ref{Reducing the optimization horizon using longer control horizons:prop:trajectory estimate} is to allow for intermediate increases within \eqref{Setup:thm:aposteriori:eq1} or \eqref{Reducing the optimization horizon using longer control horizons:prop:trajectory estimate:eq1} for certain time instants $n$ which corresponds to $\rho_{n} < 0$. Such a behavior is typical if the system is not minimal phase with respect to the cost functional and has to be accounted for if the cost functional cannot be adapted appropriately. We point out that the conditions of Theorem \ref{Acceptable violations of relaxed Lyapunov inequality:thm:estimate} and Corollary \ref{Acceptable violations of relaxed Lyapunov inequality:cor:exit strategy} with respect to $\lim_{n \to \infty} s_n$, unlike conditions \eqref{Setup:thm:aposteriori:eq1} or \eqref{Reducing the optimization horizon using longer control horizons:prop:trajectory estimate:eq1}, cannot be checked at runtime. Still, while the maximal $\overline{\alpha}$ satisfying \eqref{Setup:thm:aposteriori:eq1} or \eqref{Reducing the optimization horizon using longer control horizons:prop:trajectory estimate:eq1} is locally a lower bound on the degree of suboptimality, cf. \eqref{Reducing the optimization horizon using longer control horizons:prop:trajectory estimate:eq1}, the knowledge of $s_n$ allows us to compute such a bound for a horizon of length $\sum_{i = 0}^{n} m_i$, cf. \eqref{Acceptable violations of relaxed Lyapunov inequality:cor:exit strategy:eq1}. Note that the latter bound uses the stage costs as weighting factors.

\begin{corollary}\label{Acceptable violations of relaxed Lyapunov inequality:cor:suboptimality degree}
	Consider a feedback law $\mu_{N}^{\cS}(\cdot; \cdot)$ with sequence $(m_{n})_{n \in \N}$, $m_{n} \in \{1,2, \ldots, N - 1\}$ for all $n \in \N$ as well as the corresponding closed loop trajectory $x_{n}$, $n \in \N_0$, of \eqref{Reducing the optimization horizon using longer control horizons:eq:closed loop} with initial value $x_0 \in \bX$ to be given. Furthermore, suppose $\overline{\alpha} \in (0, 1]$ to be fixed and $(s_{n})_{n \in \N_0}$ to be defined as in Theorem \ref{Acceptable violations of relaxed Lyapunov inequality:thm:estimate}. Then we have the following:\\
	(i)
	\begin{align}
		\label{Acceptable violations of relaxed Lyapunov inequality:cor:suboptimality degree:eq1}
		\{ \overline{\alpha} \mid \overline{\alpha} \mbox{ maximally satisfies \eqref{Acceptable violations of relaxed Lyapunov inequality:cor:exit strategy:eq1}} \} = \frac{V_{N}(x_0) - V_{N}(x_{n + 1})}{V_{N}(x_0) - V_{N}(x_{n + 1}) - s_{n}} \overline{\alpha}.
	\end{align}
	(ii) If $(s_{n})_{n \in \N_0}$ converges with $\lim_{n \to \infty} s_{n} = \theta$, then \eqref{Reducing the optimization horizon using longer control horizons:prop:trajectory estimate:eq2} holds with degree of suboptimality $\alpha = \overline{\alpha} V_{N}(x_0) / (V_{N}(x_0) - \theta)$.
\end{corollary}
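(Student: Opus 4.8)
The plan is to reduce both assertions to one telescoping identity for $s_n$. First I would unfold the definition \eqref{Acceptable violations of relaxed Lyapunov inequality:eq:rho} of $\rho_i$ and use the equivalence of the open and closed loop control from \eqref{Reducing the optimization horizon using longer control horizons:eq:closed loop control} to replace $x_{u_{N}}(m_i;x_i)$ by $x_{i+1}$; with the abbreviation $L_i := \sum_{k=0}^{m_i-1}\l(x_{u_{N}}(k;x_i),u_{N}(k;x_i)) \ge 0$ this gives $\rho_i = V_N(x_i) - V_N(x_{i+1}) - \overline{\alpha} L_i$. Summing over $i=0,\ldots,n$, the $V_N$-terms telescope, so that
\begin{align*}
	s_n = V_N(x_0) - V_N(x_{n+1}) - \overline{\alpha}\sum_{i=0}^n L_i, \qquad\text{equivalently}\qquad \sum_{i=0}^n L_i = \frac{V_N(x_0) - V_N(x_{n+1}) - s_n}{\overline{\alpha}}.
\end{align*}
This identity is the engine for both parts, and it also recovers \eqref{Acceptable violations of relaxed Lyapunov inequality:cor:exit strategy:eq1} when $s_n \ge 0$.

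For part (i) I would note that inequality \eqref{Acceptable violations of relaxed Lyapunov inequality:cor:exit strategy:eq1} with $\overline{\alpha}$ replaced by a candidate $\alpha'$ reads $\alpha'\sum_{i=0}^n L_i \le V_N(x_0) - V_N(x_{n+1})$, where $\sum_{i=0}^n L_i$ does not depend on $\alpha'$. Hence, provided $x_0 \neq \xs$ so that $L_0 > 0$ and the sum is positive, the maximal admissible $\alpha'$ is $(V_N(x_0) - V_N(x_{n+1}))/\sum_{i=0}^n L_i$; substituting the value of $\sum_{i=0}^n L_i$ from the identity above turns this quotient into $\frac{V_N(x_0) - V_N(x_{n+1})}{V_N(x_0) - V_N(x_{n+1}) - s_n}\,\overline{\alpha}$, which is \eqref{Acceptable violations of relaxed Lyapunov inequality:cor:suboptimality degree:eq1}.

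For part (ii), since $L_i \ge 0$ the partial sums $\sum_{i=0}^n L_i$ are nondecreasing, and by the identity together with $V_N(x_{n+1}) \ge 0$ they are bounded above by $(V_N(x_0) - s_n)/\overline{\alpha}$; as $s_n \to \theta$ this is a uniform bound, so $\sum_{i=0}^n L_i$ converges to $V_{\infty}^{\mu_{N}^{\cS}}(x_0) < \infty$ (using, as in the proofs of Proposition \ref{Reducing the optimization horizon using longer control horizons:prop:trajectory estimate} and Corollary \ref{Acceptable violations of relaxed Lyapunov inequality:cor:exit strategy}, that the closed loop running cost on the $i$-th block equals $L_i$). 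Passing to the limit in $\overline{\alpha}\sum_{i=0}^n L_i \le V_N(x_0) - s_n$ gives $\overline{\alpha}\, V_{\infty}^{\mu_{N}^{\cS}}(x_0) \le V_N(x_0) - \theta$. Since $s_n \le V_N(x_0)$ for every $n$, we have $\theta \le V_N(x_0)$, with equality only in the degenerate case $x_0 = \xs$; hence for $x_0 \neq \xs$ we may multiply by $V_N(x_0)/(V_N(x_0)-\theta) > 0$ to obtain $\alpha V_{\infty}^{\mu_{N}^{\cS}}(x_0) \le V_N(x_0)$ with $\alpha = \overline{\alpha}\, V_N(x_0)/(V_N(x_0)-\theta)$. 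The remaining inequalities $\alpha V_{\infty}(x_0) \le \alpha V_{\infty}^{\mu_{N}^{\cS}}(x_0)$ and $V_N(x_0) \le V_{\infty}(x_0)$ of \eqref{Reducing the optimization horizon using longer control horizons:prop:trajectory estimate:eq2} follow from the principle of optimality exactly as in Proposition \ref{Reducing the optimization horizon using longer control horizons:prop:trajectory estimate}.

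I expect the only subtle points to be the degenerate case $x_0 = \xs$ (where $\sum_i L_i = 0$ and the denominators vanish, but the statement is then vacuous) and making the monotone-convergence step for $\sum_{i=0}^n L_i$ rigorous; everything else is the telescoping bookkeeping already present in Corollary \ref{Acceptable violations of relaxed Lyapunov inequality:cor:exit strategy}.
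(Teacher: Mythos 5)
Your proof is correct, and part (i) is essentially the paper's argument: telescope the definition of $\rho_i$ (using the open/closed-loop equivalence to replace $x_{u_N}(m_i;x_i)$ by $x_{i+1}$) to get $\sum_{i=0}^n L_i = \bigl(V_N(x_0)-V_N(x_{n+1})-s_n\bigr)/\overline{\alpha}$, then solve \eqref{Acceptable violations of relaxed Lyapunov inequality:cor:exit strategy:eq1}, taken with equality, for the maximal $\overline{\alpha}$. In part (ii), however, you take a slightly different route from the paper: the paper invokes the fact, established in the proof of Theorem \ref{Acceptable violations of relaxed Lyapunov inequality:thm:estimate} (and hence relying on the $\cK_\infty$ bounds $\alpha_1,\alpha_2$ assumed there), that convergence of $(s_n)$ forces $V_N(x_i)\to 0$, and then simply lets $n\to\infty$ in \eqref{Acceptable violations of relaxed Lyapunov inequality:cor:suboptimality degree:eq1}; you instead discard $V_N(x_{n+1})\ge 0$, observe that the partial cost sums are nondecreasing and bounded, identify their limit with $V_\infty^{\mu_N^{\cS}}(x_0)$, and pass to the limit in the resulting inequality. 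Your variant needs only nonnegativity of $V_N$ and $\l$, so it does not inherit the extra assumptions of Theorem \ref{Acceptable violations of relaxed Lyapunov inequality:thm:estimate}; the price is that you obtain $\alpha V_\infty^{\mu_N^{\cS}}(x_0)\le V_N(x_0)$ only as an inequality, whereas the paper's route (with $V_N(x_{n+1})\to 0$) actually identifies the limit exactly — but the inequality is all that \eqref{Reducing the optimization horizon using longer control horizons:prop:trajectory estimate:eq2} requires. Your handling of the degenerate case $x_0=\xs$ and of $\theta<V_N(x_0)$ is a welcome precision the paper glosses over.
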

\begin{proof}
	To prove (i) we first reformulate the definition of $s_{n}$ to obtain
	\begin{align}
		\label{Acceptable violations of relaxed Lyapunov inequality:cor:suboptimality degree:proof:eq1}
		\sum\limits_{i = 0}^{n} \sum\limits_{k = 0}^{m_{i} - 1} \l(x_{u_{N}}(k; x_{i}), u_{N}(k; x_{i})) = \frac{ V_{N}(x_{0}) - V_{N}(x_{n + 1}) - s_{n} }{ \overline{\alpha} }
	\end{align}
	where we used the equivalence of the open and closed loop control $u_{N}(\cdot; x_{i})$, $\mu_{N}^{\cS}(\cdot; x_{i})$ from \eqref{Reducing the optimization horizon using longer control horizons:eq:closed loop control} to replace $x_{u_{N}}(m_{i}; x_{i})$ by $x_{i + 1}$ for $i = 0, \ldots, n$. To obtain $\{ \overline{\alpha} \mid \overline{\alpha} \mbox{ maximally satisfies \eqref{Acceptable violations of relaxed Lyapunov inequality:cor:exit strategy:eq1}} \}$ we consider \eqref{Acceptable violations of relaxed Lyapunov inequality:cor:exit strategy:eq1} to hold as an equality and solve for $\overline{\alpha}$. Now, we can use \eqref{Acceptable violations of relaxed Lyapunov inequality:cor:suboptimality degree:proof:eq1} to substitute the resulting denominator which gives us \eqref{Acceptable violations of relaxed Lyapunov inequality:cor:suboptimality degree:eq1}.

	In order to obtain assertion (ii), we use the following fact shown in the proof of Theorem \ref{Acceptable violations of relaxed Lyapunov inequality:thm:estimate}: If $(s_{n})_{n \in \N_0}$ converges, then $V_{N}(x_{i}) \to 0$ as $i \to \infty$. Hence, taking $n$ to infinity in \eqref{Acceptable violations of relaxed Lyapunov inequality:cor:suboptimality degree:eq1} shows assertion (ii).
\end{proof}

Corollaries \ref{Acceptable violations of relaxed Lyapunov inequality:cor:exit strategy} and \ref{Acceptable violations of relaxed Lyapunov inequality:cor:suboptimality degree} give rise to a possible exit strategy in Step (Id): If $s_{n}$ remains positive, then asymptotic stability and the desired performance bound can be guaranteed if the control loop is closed. One possible implementation of an algorithm based on Corollary \ref{Acceptable violations of relaxed Lyapunov inequality:cor:exit strategy} is the following:

\begin{fshaded}
	\begin{algorithm}[Extension of Algorithm \ref{Reducing the optimization horizon using longer control horizons:alg:basic algorithm}]\label{Acceptable violations of relaxed Lyapunov inequality:alg:rho improvement}$\\ $
	Given state $x_{0} := x$, $n = 0$, list $\cS = (n)$, $N \in \mathbb{N}_{\geq 2}$, $\overline{\alpha} \in (0, 1]$ and $s_{n} = 0$
	\begin{itemize}
		\item[(I)] Set $j := 0$ and compute $u_{N}(\cdot; x_{n})$ and $V_{N}(x)$. Do
		\begin{itemize}
			\item[(a)] Set $j := j + 1$ and compute $V_{N}(x_{u_{N}}(j; x_{n}))$
			\item[(b)] Compute $\rho_{n}$ from \eqref{Acceptable violations of relaxed Lyapunov inequality:eq:rho} and set $\rho_{n}(j) := \rho_{n}$ 
			\item[(c)] If $\rho_{n}(j) \geq 0$ holds: Set $m_{n} = j$ and goto (II)
			\item[(d)] If $j = N - 1$:
			\begin{itemize}
				\item[] If $s_{n} + \max_{j \in \{1, \ldots, N - 1\}} \rho_{n}(j) < 0$: Print warning, set $m_{n} := 1$ and goto (II)
				\item[] Else: Set $m_{n} := \min\{ j^{*} \mid j^{*} = \argmin_{j \in \{1, \ldots, N - 1\}} - \rho_{n}(j) \}$, $\rho_{n} := \rho_{n}(m_n)$ and goto (II)
			\end{itemize}
		\end{itemize}
		while $\rho_{n} < 0$
		\item[(II)] For $j = 1, \ldots, m_{n}$ do
		\begin{itemize}
			\item[] Implement $\mu_{N}^{\cS}(j - 1; x_{n}) := u_{N}(j-1; x_{n})$
		\end{itemize}
		\item[(III)] Set $\cS := (\cS, \mbox{back}(\cS) + m_{n})$, $s_{n + 1} := s_{n} + \rho_{n}(m_{n})$, $x_{n + 1} := x_{\mu_{N}^{\cS}}(m_{n}; x_{n})$, $n := n + 1$ and goto (I)
	\end{itemize}
	\end{algorithm}
	\vspace*{-0.1cm}
\end{fshaded}

If $s_{n + 1} = s_{n} + \max_{j \in \{1, \ldots, N - 1\}} \rho_{n}(j) < 0$ the performance Estimate \eqref{Reducing the optimization horizon using longer control horizons:prop:trajectory estimate:eq2} cannot be guaranteed. Hence, even if $\lim_{n \to \infty} s_{n} \geq 0$ holds, the exit strategy cannot be used since this knowledge is not at hand at time instant $n$. Furthermore, we like to point out that Algorithm \ref{Acceptable violations of relaxed Lyapunov inequality:alg:rho improvement} coincides with Algorithm \ref{Reducing the optimization horizon using longer control horizons:alg:basic algorithm} for $n=0$, i.e., the first MPC step. Note that this is the only time instant at which we may increase the optimization horizon $N$ such that the presented stability proofs still hold. Hence, we can repeat Step (I) of the algorithm in order to ensure the desired relaxed Lyapunov inequality. In contrast to that, reducing $N$ may be done at runtime of the proposed algorithms.

\begin{remark}\label{Acceptable violations of relaxed Lyapunov inequality:rem:shortening}
	Using the definition of $\rho_i$ in \eqref{Acceptable violations of relaxed Lyapunov inequality:eq:rho} and $V_N(x) \geq V_{N - k}(x)$ for $k \in \{ 0, \ldots, N - 2 \}$ we obtain
	\begin{align*}
	\hat{\rho}_i := V_{N}(x_{i}) - V_{N - k}(x_{u_{N}}(m_{i}; x_{i})) - \overline{\alpha} \sum\limits_{k = 0}^{m_{i} - 1} \l(x_{u_{N}}(k; x_{i}), u_{N}(k; x_{i})) \geq \rho_i(\overline{\alpha}) = \rho_i
	\end{align*}
	for $k \in \{ 0, \ldots, N - 2 \}$. Hence, the telescope sum argument used in the proofs of Proposition \ref{Reducing the optimization horizon using longer control horizons:prop:trajectory estimate} and Theorem \ref{Acceptable violations of relaxed Lyapunov inequality:thm:estimate} still holds if the horizon length $N$ decreases monotonically along the closed loop.
\end{remark}

Remark \ref{Acceptable violations of relaxed Lyapunov inequality:rem:shortening} gives rise to the following strategy: First, a large optimization horizon is chosen to avoid the startup problem. Then, the horizon can be reduced gradually along the closed loop provided the relaxed Lyapunov inequality \eqref{Reducing the optimization horizon using longer control horizons:prop:trajectory estimate:eq1} holds for the reduced horizon for all future time instants. In context of Theorem \ref{Acceptable violations of relaxed Lyapunov inequality:thm:estimate} one has to guarantee that $s_n$ stays positive along the closed loop in order to reduce the optimization horizon. Note that an algorithm based on a quantity representing slack from proceeding steps has also been designed in \cite[Theorem 1]{G2010} in the context of varying optimization horizons.

\begin{remark}
	For $\overline{\alpha} > 0$ an additional exit strategy is the following: if $0 < \alpha < \overline{\alpha}$ holds for suboptimality index $\alpha$ computed from Corollary \ref{Acceptable violations of relaxed Lyapunov inequality:cor:suboptimality degree}(i), then stability or a certain performance bound may still be ensured. Additionally, Corollary \ref{Acceptable violations of relaxed Lyapunov inequality:cor:exit strategy} allows for checking whether the originally desired suboptimality estimate $\alpha \geq \overline{\alpha}$ is guaranteed again at a later time instant.
\end{remark}

In order to check whether the control loop can be closed more often without loosing stability or violating the lower bound on the degree of suboptimality $\overline{\alpha}$, Corollaries \ref{Acceptable violations of relaxed Lyapunov inequality:cor:exit strategy} and \ref{Acceptable violations of relaxed Lyapunov inequality:cor:suboptimality degree} are employed directly. Note that this is possible since the sequence $(s_{n})_{n \in \N_0}$ gives us an absolute value with respect to the desired decrease along the closed loop -- contrary to Propositions \ref{Reducing the optimization horizon using longer control horizons:prop:trajectory estimate} and \ref{Robustification by reducing the control horizon:prop:m2one estimate} which have to be interpreted in terms of the stage costs $\l(\cdot, \cdot)$. One possible implementation of the update check is the following:

\begin{fshaded}
	\begin{algorithm}[Modification of Algorithm \ref{Acceptable violations of relaxed Lyapunov inequality:alg:rho improvement}]\label{Acceptable violations of relaxed Lyapunov inequality:alg:rho improvement2}$\\ $ \vspace*{-0.5cm}
	\begin{itemize}
		\item[(II)] Set $\hat{n} := 1$ and $s := \mbox{back}(\cS)$. For $j = 1, \ldots, m_{n}$ do
		\begin{itemize}
			\item[(a)] Implement $\mu_{N}^{\cS}(j-1; x_{n}) := u_{N}(j-1; x_{n})$
			\item[(b)] Compute $u_{N}(\cdot; x_{u_{N}}(j; x_{n}))$, construct $\hat{u}_{N}(\cdot; x_{n})$ according to \eqref{Robustification by reducing the control horizon:eq:control update} and compute $V_{N}(x_{\hat{u}_{N}}(m_{n}; x_{n}))$
			\item[(c)] Compute $\rho_{n}$ from \eqref{Acceptable violations of relaxed Lyapunov inequality:eq:rho} with $u_{N}(\cdot; x_{n})$ replaced by $\hat{u}_{N}(\cdot; x(n))$ and set $\rho_{n}(j) := \rho_{n}$ 
			\item[(d)] If $s_{n + \hat{n} - 1} + \max_{j \in \{1, \ldots, N - 1\}} \rho_{n}(j) \geq 0$ holds:
			\begin{itemize}
				\item[] Update $u_{N}(\cdot; x_{n}) := \hat{u}_{N}(\cdot; x_{n})$.
				\item[] If $j < m_{n}$: Set $\cS := ( \cS, s + j )$, $s_{n + \hat{n}} := s_{n + \hat{n} - 1} + \rho_{n}(j)$, $x_{n + \hat{n}} := x_{\mu_{N}^{\cS}}(j; x_{n})$  and $\hat{n} = \hat{n} + 1$. 
			\end{itemize}
 		\end{itemize}
		\item[(III)] Set $\cS := (\cS, s + m_{n})$, $s_{n + \hat{n}} := s_{n + \hat{n} - 1} + \rho_{n}(m_{n})$, $x_{n + \hat{n}} := x_{\mu_{N}^{\cS}}(m_{n}; x_{n})$, $n := n + \hat{n}$ and goto (I)
	\end{itemize}
	\end{algorithm}
	\vspace*{-0.1cm}
\end{fshaded}

\begin{example}\label{Acceptable violations of relaxed Lyapunov inequality:ex}
	Consider Example \ref{Reducing the optimization horizon using longer control horizons:ex1} one last time. As we have seen in Example \ref{Reducing the optimization horizon using longer control horizons:ex2} and \ref{Robustification by reducing the control horizon:ex} stability of the closed loop can be shown for initial values $x_0 \in \mathcal{X}$ by means of Propositon \ref{Reducing the optimization horizon using longer control horizons:prop:trajectory estimate} for $m_n \geq 1$ and by Corollary \ref{Robustification by reducing the control horizon:cor:stability} for $m_n = 1$. In Example \ref{Robustification by reducing the control horizon:ex} it has also been shown that one cannot guarantee the lower performance bound $\overline{\alpha} = 0.01$ by Theorem \ref{Robustification by reducing the control horizon:thm:stability}. Yet, we would expect a better performance of the Riccati based feedback law. And indeed, using Algorithm \ref{Acceptable violations of relaxed Lyapunov inequality:alg:rho improvement2} together with Corollary \ref{Acceptable violations of relaxed Lyapunov inequality:cor:suboptimality degree} we obtain $\alpha = 0.52307$ for standard MPC ($m_n = 1$) with horizon length $N = 3$. In Figure \ref{Acceptable violations of relaxed Lyapunov inequality:fig:alpha development} the values of $\alpha$ resulting from Proposition \ref{Reducing the optimization horizon using longer control horizons:prop:trajectory estimate} and Corollary \ref{Acceptable violations of relaxed Lyapunov inequality:cor:suboptimality degree} are displayed for different horizons $N$ showing the improvement of Corollary \ref{Acceptable violations of relaxed Lyapunov inequality:cor:suboptimality degree}. For reasons of completeness, we also displayed the performance results from \cite[Section 6]{NP1997}. Note that while the latter hold for the entire state space $\X = \R^2$, our results are only exact up to discretization accuracy. Still, the improvement of suboptimality bounds is significant and allows us to reduce the optimization horizon from $N = 5$ as shown in \cite{NP1997} to $N = 3$. \\
	Unfortunately, we observe $s_{n + 1} = s_{n} + \max_{j \in \{1, \ldots, N - 1\}} \rho_{n}(j) < 0$ along the closed loop for exactly the same initial values for which condition $\alpha \geq \overline{\alpha}$ in Step (Ic) of Algorithm \ref{Reducing the optimization horizon using longer control horizons:alg:basic algorithm} is not satisfied for at least one iterate $n$, cf. Figure \ref{Acceptable violations of relaxed Lyapunov inequality:fig:alpha development}(right).
\end{example}
\begin{figure}[!ht]
	\begin{center}
		\includegraphics[width=5cm]{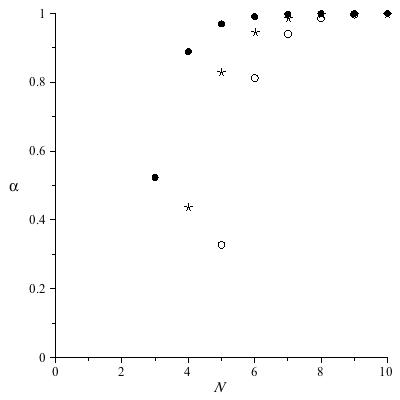}
		\hspace*{1cm}
		\includegraphics[width=5cm]{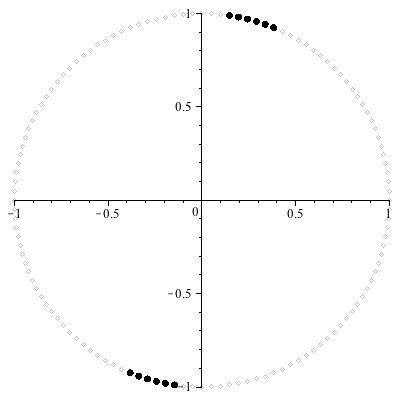}
		\caption{Left: Illustration suboptimality bounds from \cite[Section 6]{NP1997} ($\circ$), $\overline{\alpha}$ from Proposition \ref{Reducing the optimization horizon using longer control horizons:prop:trajectory estimate} ($\star$) and $\alpha$ from Corollary \ref{Acceptable violations of relaxed Lyapunov inequality:cor:exit strategy} ($\bullet$) for increasing horizon length $N$. Right: Illustration of initial values for which $s_{n + 1} = s_{n} + \max_{j \in \{1, \ldots, N - 1\}} \rho_{n}(j) < 0$ holds for at least one iterate $n$ ($\bullet$)}
		\label{Acceptable violations of relaxed Lyapunov inequality:fig:alpha development}
	\end{center}
\end{figure}

\begin{remark}
	The proposed algorithms and theoretic results in this paper are designed in a trajectory based manner. Therefore, these methods are not suited in order to ensure asymptotic stability or a desired suboptimality degree for a set of initial values $\mathcal{X} \subset \bX$ --- in contrast to the approaches presented in \cite{G2009, NP1997}. Yet, incorporating the conditions presented in Proposition \ref{Robustification by reducing the control horizon:prop:m2one estimate} or Corollary \ref{Acceptable violations of relaxed Lyapunov inequality:cor:exit strategy} in the methodology proposed in \cite[Section 4]{G2009} is possible. This topic will be subject to future research.
\end{remark}

%%%%%%%%%%%%%%%%%%%%%%%%%%%%%%%%%%%%%%%%%%%%%%%%%%%%%%%%%%%%%%%%%%%%%%%%%%%%%%%%
\section{Conclusions}
\label{Section:Conclusions}
%%%%%%%%%%%%%%%%%%%%%%%%%%%%%%%%%%%%%%%%%%%%%%%%%%%%%%%%%%%%%%%%%%%%%%%%%%%%%%%%

An algorithm based approach has been presented which ensures stability of the MPC closed loop without terminal constraints and/or costs. In particular, the proposed methodology allows for deducing stability and performance bounds for comparatively small prediction horizons. These results are based on structural properties of a relaxed Lyapunov inequality for the open loop which are computed a priori. Conditions which guarantee that this Lyapunov inequality is maintained despite closing the control loop at additional time instants have been derived in order to robustify the outcome of the corresponding algorithm. Last, a further improvement has been achieved by incorporating an accumulated quantity in the presented algorithms which reflects previous decrease in terms of the value function of the MPC problem. Doing so yields an exit strategy which often resolves problems occuring within our basic algorithms if the prediction horizon is chosen too small. Furthermore, enhanced performance estimates are obtained.

%%%%%%%%%%%%%%%%%%%%%%%%%%%%%%%%%%%%%%%%%%%%%%%%%%%%%%%%%%%%%%%%%%%%%%%%%%%%%%%%%%
%\bibliographystyle{zamm-title}
%\bibliography{m21mpc.bib}
%%%%%%%%%%%%%%%%%%%%%%%%%%%%%%%%%%%%%%%%%%%%%%%%%%%%%%%%%%%%%%%%%%%%%%%%%%%%%%%%%%
\providecommand{\WileyBibTextsc}{}
\let\textsc\WileyBibTextsc
\providecommand{\othercit}{}
\providecommand{\jr}[1]{#1}
\providecommand{\etal}{~et~al.}

%%%%%%%%%%%%%%%%%%%%%%%%%%%%%%%%%%%%%%%%%%%%%%%%%%%%%%%%%%%%%%%%%%%%%%%%%%%%%%%%

\end{document}